\def\BibTeX{{\rm B\kern-.05em{\sc i\kern-.025em b}\kern-.08em
    T\kern-.1667em\lower.7ex\hbox{E}\kern-.125emX}}
\tikzset{join/.code=\tikzset{after node path={%
\ifx\tikzchainprevious\pgfutil@empty\else(\tikzchainprevious)%
edge[every join]#1(\tikzchaincurrent)\fi}}}
\tikzset{>=stealth',every on chain/.append style={join},
         every join/.style={->}}
\tikzstyle{labeled}=[execute at begin node=$\scriptstyle,
\newtheorem{Def}{Definition}
\newtheorem{Thm}{Theorem}
\newtheorem{Rmk}[Thm]{Remark}
\newtheorem{Exm}[Thm]{Example}
\newtheorem{Cor}[Thm]{Corollary}
\newtheorem{Lem}[Thm]{Lemma}
\newtheorem{Pro}[Thm]{Proposition}
\theoremstyle{plain}
\theoremstyle{definition}
\begin{document}

\begin{frontmatter}



\title{Frame definability in finitely-valued modal logics}


\author{Guillermo Badia}

\address{
School of Historical and Philosophical Inquiry\\
 University of Queensland\\ 
 Brisbane, St Lucia, QLD 4072, Australia\\ 
\texttt{guillebadia89@gmail.com} }

\author{Xavier Caicedo}

\address{Department of Mathematics\\ University of los Andes \\ Carrera 1 \# 18A - 12.\\ 11171 Bogot\'a, Colombia\\ \texttt{xcaicedo@uniandes.edu.co}}

\author{Carles Noguera}

\address{Department of Information Engineering and Mathematics\\University of Siena\\
          San Niccol\`o, via Roma 56\\53100 Siena, Italy\\
           \texttt{carles.noguera@unisi.it}}

\begin{abstract}
In this paper we study frame definability in finitely-valued modal logics and establish two main results via suitable translations: (1) in finitely-valued modal logics one cannot define more classes of frames than are already definable in classical modal logic (cf.~\citep[Thm.~8]{tho}), and (2)  a large family of finitely-valued modal logics define exactly the same classes of frames as classical modal logic (including modal logics based on finite Heyting and \MV-algebras, or even \BL-algebras). In this way one may observe, for example, that the celebrated Goldblatt--Thomason theorem applies immediately to these logics. In particular, we obtain the central result from~\citep{te} with a much simpler proof  and answer one of the open questions left in that paper. Moreover, the proposed translations allow us to determine the computational complexity of a big class of finitely-valued modal logics.

\end{abstract}

\begin{keyword} many-valued logics \sep modal logics \sep frame definability \sep finite lattices



\MSC 03C95 \sep 03B52 \sep 03B50

\end{keyword}

\end{frontmatter}

\section{Introduction}

Propositional modal logic is, at the level of frames, famously incomparable (in terms of expressive power) with first-order logic. Indeed, there are classes of frames definable in first-order logic that are not modally definable and viceversa. An example of the former phenomenon is the class of frames axiomatizable by the first-order sentence $\forall x \,\exists y\,(Rxy \wedge Ryy)$, and an example of the latter is the class of frames axiomatized by ``L\"ob's formula''  $\Box (\Box p\rightarrow p) \rightarrow \Box p$ which defines frames with transitive relations where the converse relation is well-founded~\citep[pp.~33--34]{van3}.

The original Goldblatt--Thomason theorem~\citep[Thm.~8]{gold} provides a model-theoretic characterization of modal axiomatizability for elementary classes of frames in terms of closure under taking generated subframes, disjoint unions, bounded morphic images, and reflection of  ultrafilter extensions. Even though the proof in~\citep{gold} is algebraic in spirit (with a detour via the usual extension of Stone duality), there are well-known ways of obtaining the result by pure model-theoretic methods \citep{van}. Each closure condition in the theorem is necessary (see~\citep[p.~6]{van}). Furthermore, the condition of elementarity of the class can be relaxed to closure under ultrapowers. There are also results, already contained in~\citep{gold}, using more complicated constructions which characterize any modally axiomatizable class of frames.

Many-valued modal logics, i.e.\ expansions with modalities of non-classical propositional logics with an intended many-valued semantics, have been around at least since Kristen Segerberg studied three-valued modal logics in~\citep{Segerberg}. The topic gained momentum with Melvin Fitting's work when he axiomatized in~\citep{Fitting:MVModal,Fitting:MVModalII} the relational semantics for these logics based on Kripke frames with finitely-valued propositional evaluations in each world (possibly, also, finitely-valued accessibility relations between worlds), and proposed a natural interpretation of modalities capitalizing on the lattice structure of the semantics of the base logic. This proposal inspired, in particular, a long and nowadays quite lively stream of research in fuzzy modal logics (see e.g.~\citep{bou,Caicedo-Rodriguez:OrderModal,Caicedo-Rodriguez:BimodalGodel,Cintula:GeneralPossibleWorld,Hajek:1998,Hajek:FuzzyModalLogicS5,HansoulTeheux:ModalLukasiewicz,Mart,Vidal:PhD,Vidal-Esteva-Godo:ModalProduct}). As a part of this research, Bruno Teheux~\citep{te} has established an analogue of the Goldblatt--Thomason theorem for modal \L ukasiewicz logics determined by finitely-valued Kripke models over crisp frames (i.e.\ with a two-valued accessibility relation).

The goal of the present paper is to investigate frame definability in the many-valued context in a general approach that encompasses Teheux's results. We prove two main results: (1) each class of crisp frames definable in a finitely-valued modal logic is already definable in classical modal logic, and (2) for a large family of finitely-valued modal logics, the converse inclusion also holds, that is, their definable crisp frames {\em coincide} with those definable in classical modal logic. We proceed via translations of the many-valued modal logic into classical modal logic and back (inspired by the Kolmogorov--Glivenko translation) which preserve crisp frames. Furthermore, for these finitely-valued modal logics, our translations ensure that their computational complexity  coincides with that of their two-valued counterparts.

The first result generalizes a (little known) work by Steven K.~Thomason~\citep{tho} in which he translated finitely-valued modal logics into two-valued modal logics. His approach mostly stayed at the level of frames and did not provide an explicit recursive definition. Moreover, his result was restricted to a class of logics in a language with standard connectives and truth constants. We propose a translation close to Thomason's, but based on models, with an explicit recursive definition, and free from the mentioned syntactical restrictions. The converse inclusion that we present in our second main result has not yet been considered in the literature, as far as we know, and it applies to any modal logic based on a finite lattice algebra that can interpret a Boolean algebra (thus, including modal logics based on finite Heyting and \MV-algebras, and even \BL-algebras). Therefore, even though Teheux's result had been presented as a generalization of the original theorem from~\citep{gold}, it actually {\em follows} from the classical Goldblatt--Thomason theorem and our results. This also answers, as we will see, an open problem left in~\citep{te}.
Incidentally, we can also obtain the extension of the Goldblatt--Thomason theorem for predicate finitely-valued modal logic using the work from~\citep{zho} for the classical setting.

The paper is arranged as follows:  \S \ref{pre} succintly presents the necessary preliminaries regarding the syntactical and semantical setting of the paper. \S \ref{trans} introduces our translation from finitely-valued to classical modal logic, shows that it has the intended semantic behavior. \S \ref{main} contains the mentioned two main results of the paper: in \S \ref{ModalImpliesClassical} we prove that many-valued modal definability implies classical modal definability and  \S \ref{ClassicalImpliesModal} shows that the implication can be reversed whenever the many-valued modal logic is based on a finite lattice algebra that interprets a Boolean algebra. \S \ref{comp} uses the results in the previous sections to prove that the problems of validity and consequence from finite sets of premises in the considered many-valued modal logics have exactly the same computational complexity as their classical counterparts. Finally, \S \ref{con} offers some concluding remarks and lines for further research.

\section{Preliminaries}\label{pre}
Let $\alg{A} = \tuple{\land^\alg{A},\lor^\alg{A},\ldots}$ be an arbitrary {\em finite} (henced bounded) lattice possibly expanded with further operations (which from now on we will call a \emph{lattice algebra}). When convenient, we will denote the top and the bottom element of $\alg{A}$ respectively as $1_\alg{A}$ and $0_\alg{A}$. In particular, we will refer to the following instances:
\begin{itemize}
\item A {\em Boolean algebra} is a lattice algebra $\alg{A} = \tuple{\land^\alg{A},\lor^\alg{A},\neg^\alg{A}}$ in which the lattice is distributive and $\neg$ is the complement operation (that is, for each element $a\in A$, $a \lor^\alg{A} \neg^\alg{A} a = 1_\alg{A}$ and $a \land^\alg{A} \neg^\alg{A} a = 0_\alg{A}$).
\item A {\em pseudocomplemented lattice} is a lattice algebra $\alg{A} = \tuple{\land^\alg{A},\lor^\alg{A},\neg^\alg{A}}$ in which for each $a \in A$, $\neg^\alg{A} a = \max \{b \in A \mid a \land b = 0_\alg{A}\}$.
\item A {\em Stone algebra} is a pseudocomplemented lattice $\alg{A} = \tuple{\land^\alg{A},\lor^\alg{A},\neg^\alg{A}}$ in which the lattice is distributive and, for each $a \in A$, $\neg^\alg{A} a \lor \neg^\alg{A} \neg^\alg{A} a = 1_\alg{A}$.
\item A {\em Heyting algebra} is a lattice algebra $\alg{A} = \tuple{\land^\alg{A},\lor^\alg{A},\to^\alg{A}}$ such that, for each $a,b \in A$, $a \to^\alg{A} b = \max \{c \in A \mid a \land^\alg{A} c \leq b\}$.
\item An {\em \MV-algebra} is a lattice algebra $\alg{A} = \tuple{\land^\alg{A},\lor^\alg{A},\conj^\alg{A},\to^\alg{A}}$ such that
\begin{itemize}
\item $\conj^\alg{A}$ is commutative, monotonic w.r.t.\ the lattice order, and has $1_\alg{A}$ as neutral element,
\item for each $a,b \in A$, $a \to^\alg{A} b = \max \{c \in A \mid a \conj^\alg{A} c \leq b\}$,
\item for each $a,b \in A$, $(a \to^\alg{A} b) \lor^\alg{A} (b \to^\alg{A} a) = 1_\alg{A}$,
\item and for each $a,b \in A$, $a \lor^\alg{A} b = (a \to^\alg{A} b) \to^\alg{A} b$.
\end{itemize}
\end{itemize}

The formulas of the modal language $\fm_\alg{A}^{\Diamond \Box}(\tau)$ are built from a denumerable set of propositional variables $\tau$ by means of the binary connectives $\land$ and $\lor$, an $n$-ary connective for each additional $n$-ary operation of $\alg{A}$, and the unary connectives (modalities) $\Diamond$ and $\Box$. 

In particular, if \alg{2} is the two-element Boolean algebra, we may think of the formulas of $\fm_\alg{2}^{\Diamond \Box}(\tau)$ as the usual classical modal language.


We will work with \emph{crisp} frames $\model{F}= \tuple{W,R}$ as in classical modal logic, i.e.\ $W$ is a non-empty set (whose elements are called \emph{worlds}) and $R \subseteq W^2$ is a binary relation (called \emph{accessibility relation}). A Kripke \alg{A}-valued model $\model{M}$ is defined as a pair $\tuple{\model{F},V}$, where $\model{F}= \tuple{W, R}$ is a frame and $V \colon \tau \times W \longrightarrow A$ is mapping called a \emph{valuation}; we say that $\model{M}$ is \emph{based on $\model{F}$}. Given such a model, for each $w \in W$ and each formula $\f \in \fm_\alg{A}^{\Diamond \Box}(\tau)$, we inductively define the truth-value $\semvalue{\f}^\model{M}_w$ as:

\begin{center}
\begin{tabular}{rclll}
$\semvalue{p}^\model{M}_w$ & $=$ & $V(p,w),$ & if $p \in \tau$
\\[0.5ex]
$ \semvalue{\circ(\p_1, \ldots, \p_n)}^\model{M}_w$ & $=$ & $\circ^\alg{A}(\semvalue{\p_1}^\model{M}_w, \ldots, \semvalue{\p_n}^\model{M}_w),$ & for each $n$-ary connective $\circ$,
\\[0.5ex]
$ \semvalue{\Diamond \p}^\model{M}_w$ & $=$ & $\sup_{\leq_\alg{A}}
\{\semvalue{\p}^\model{M}_v \mid Rwv\},$ &
\\[0.5ex]
$ \semvalue{\Box\p}^\model{M}_w$ & $=$ & $\inf_{\leq_\alg{A}}
\{\semvalue{\p}^\model{M}_v \mid Rwv \}.$ &
\end{tabular}
\end{center}

A formula $\f$ from  $\fm_\alg{A}^{\Diamond \Box}(\tau)$ is said to be \alg{A}-\emph{valid} in a frame $\model{F}= \tuple{W, R}$ (in symbols, $\model{F} \models_\alg{A} \f$)  if for any \alg{A}-valued model $\model{M}$ based on $\model{F}$, $\semvalue{\f}^\model{M}_w = 1_\alg{A}$ for every world $w \in W$ (alternatively, we say that $\f$ is \emph{globally true} in $\model{M}$). Furthermore, a set of formulas $\Phi$ from $\fm_\alg{A}^{\Diamond \Box}(\tau)$ \emph{modally \alg{A}-defines} a frame class $\mathbb{F}$  if $\mathbb{F}$ contains exactly those frames where every $\f \in \Phi$ is \alg{A}-valid (if $\Phi = \{\f\}$, we say that \emph{$\f$ modally \alg{A}-defines $\mathbb{F}$}) (cf.~\citep[Definition 2.2 and 2.3]{te}).  Similarly, we may say that a class $\mathbb{F}$ of frames is \emph{modally \alg{A}-definable} if there is a set of formulas $\Phi$ such that modally \alg{A}-defines $\mathbb{F}$. Finally, given a frame class $\mathbb{F}$, we define a consequence relation in the following way: for each $\Gamma \cup \{\f\} \subseteq \fm_\alg{A}^{\Diamond \Box}(\tau)$, we write $\Gamma \vDash_{\mathrm{Log}(\mathbb{F},\alg{A},\tau)} \f$ iff for each $\mathfrak{F} \in \mathbb{F}$ and each \alg{A}-valued model $\model{M}$ based on $\model{F}$ we have that $\f$ is globally true in $\model{M}$ whenever all formulas from $\Gamma$ are  globally true in $\model{M}$. Thus, $\mathrm{Log}(\mathbb{F},\alg{A},\tau)$ can be called the {\em global modal $\alg{A}$-valued logic given by $\mathbb{F}$.}


Observe that in the case $\alg{A}\cong \alg{2}$ we retrieve the standard definitions from classical modal logic. In this case, we use the standard notation $\tuple{\model{M}, w} \models \f$ to signify that $\semvalue{\f}^\model{M}_w =1_\alg{A}$.

\section{Translating finitely-valued modal logics into classical modal logics}\label{trans}

In this section, we provide a translation of formulas of a many-valued modal logic into formulas of standard two-valued modal logic. In contrast to  \citep{tho},  we give an explicit inductive definition of the translation already at the level of models.

Given a finite lattice algebra $\alg{A}$ and a denumerable set of variables $\tau = \{p_1, p_2, \ldots\}$, we define $\tau^* =\bigcup\limits_{i \geq 1}\{p_i^a \mid a \in A\} $. Now,  we define translations $T^\alg{a}$ from $\fm_\alg{A}^{\Diamond \Box}(\tau)$ into $\fm_\alg{2}^{\Diamond \Box}(\tau^*)$, for each  element $a \in A$, by simultaneous induction as follows:
 \begin{center}

\begin{tabular}{rcll}
$T^a(p_i)$ & $=$ & 
$p_i^a$  \,\,\,\, ($i \geq 1$)&
\\[0.5ex]
$T^a({\circ}(\vectn{\p}))$ & $=$ & $\bigvee\limits_{\substack {b_1,  \ldots, b_n \in A \\{\circ}^\alg{A}(b_1, \ldots, b_n) = a}}(T^{b_1}(\p_1) \wedge \ldots\wedge T^{b_n}(\p_n))$ & 
\\\vspace{0.4ex}
$T^a(\Diamond \p)$ & $=$ & $(\bigvee\limits_{\substack {k \leq |A| \\b_1  \ldots b_k \in A \\ b_1 \vee^\alg{A} \ldots \vee^\alg{A} b_k = a}} \bigwedge\limits_{i=1}\limits^{k} \Diamond  T^{b_i}(\p) ) \wedge \Box ( \bigvee\limits_{\substack {b \in A \\{b \leq a}}}     T^{b}(\p))$
\\\vspace{0.4ex}
$T^a(\Box \p)$ & $=$ & $(\bigvee\limits_{\substack {k \leq |A| \\b_1,  \ldots, b_k \in A \\ b_1 \wedge^\alg{A} \ldots \wedge^\alg{A} b_k = a}} \bigwedge\limits_{i=1}\limits^{k} \Diamond  T^{b_i}(\p) ) \wedge \Box ( \bigvee\limits_{\substack {b \in A \\{a \leq b}}}     T^{b}(\p)). $

\end{tabular}
\end{center}

Furthermore, given any \alg{A}-valued model $\model{M} = \tuple{\model{F},V}$ based on a frame $\model{F}=\tuple{W, R}$ for $\fm_\alg{A}^{\Diamond \Box}(\tau)$, we define a \alg{2}-valued model $\model{M}^* $ for  $\fm_\alg{2}^{\Diamond \Box}(\tau^*)$ based on the same frame and with a valuation $V^*$ defined as follows:
\begin{center}
$V^*(p_i^a, w) = 1$ iff $V(p_i, w) = a$, for each $i \geq 1$, each $w \in W\!,$ and each $a \in A$.
\end{center}
Given this,  it is not hard to see that the translation $T^a(\f)$ simply rewrites in the classical modal language the conditions for $\f$ to take the value $a$; more precisely, by induction on the complexity of formulas, we can easily prove the following Switch Lemma:

\begin{Lem}[Switch Lemma]
Let $\alg{A}$ be a finite lattice algebra, $\tau$ a denumerable set of variables, and\/ $\model{M}$ an \alg{A}-valued model based on a frame $\model{F}$. For each formula $\f \in \fm_\alg{A}^{\Diamond \Box}(\tau)$, and each world $w$, we have:   $\semvalue{\f}^\model{M}_w = a$ iff\/ $\tuple{\model{M}^*, w} \models T^a(\f)$. 

\end{Lem}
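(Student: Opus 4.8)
The plan is to prove the equivalence by induction on the complexity of $\f$, the guiding idea being that $T^a(\f)$ is designed to be exactly the classical formula asserting ``$\f$ takes value $a$''. The base case is immediate from the definition of $V^*$: for $\f = p_i$ we have $T^a(p_i) = p_i^a$, and $\tuple{\model{M}^*,w} \models p_i^a$ holds iff $V^*(p_i^a,w)=1$ iff $V(p_i,w)=a$ iff $\semvalue{p_i}^\model{M}_w = a$. For the connective case $\f = \circ(\p_1,\dots,\p_n)$, I would observe that the values $\semvalue{\p_1}^\model{M}_w,\dots,\semvalue{\p_n}^\model{M}_w$ are uniquely determined, so by the induction hypothesis $\tuple{\model{M}^*,w}$ satisfies the disjunct indexed by a tuple $(b_1,\dots,b_n)$ exactly when $b_i = \semvalue{\p_i}^\model{M}_w$ for all $i$. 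The whole disjunction then holds iff the tuple of actual values satisfies $\circ^\alg{A}(b_1,\dots,b_n)=a$, i.e.\ iff $\circ^\alg{A}(\semvalue{\p_1}^\model{M}_w,\dots,\semvalue{\p_n}^\model{M}_w)=\semvalue{\circ(\p_1,\dots,\p_n)}^\model{M}_w = a$.

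The heart of the argument is the modal case, say $\f = \Diamond\p$ (the case $\f=\Box\p$ being order-dual, with $\vee^\alg{A}$, $\sup$ and $\leq$ replaced by $\wedge^\alg{A}$, $\inf$ and $\geq$). Writing $S = \{\semvalue{\p}^\model{M}_v \mid Rwv\}$, so that $\semvalue{\Diamond\p}^\model{M}_w = \sup S$, the claim becomes: $\sup S = a$ iff $\tuple{\model{M}^*,w}$ satisfies $T^a(\Diamond\p)$. I would read the two conjuncts of $T^a(\Diamond\p)$ as the two halves of the least-upper-bound condition. Using the induction hypothesis, $\tuple{\model{M}^*,v}\models \bigvee_{b\leq a} T^b(\p)$ says exactly that $\semvalue{\p}^\model{M}_v \leq a$, so the $\Box$-conjunct holds iff $a$ is an upper bound of $S$; and $\tuple{\model{M}^*,w}\models \Diamond T^{b}(\p)$ says exactly that $b$ is realised by some successor, i.e.\ $b\in S$, so the first conjunct holds iff some $b_1,\dots,b_k\in S$ satisfy $b_1\vee^\alg{A}\cdots\vee^\alg{A} b_k = a$. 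The forward direction then takes $b_1,\dots,b_k$ to be an enumeration of the (finitely many) elements of $S$; the backward direction combines $\sup S \leq a$ (from the upper-bound conjunct) with $a = b_1\vee^\alg{A}\cdots\vee^\alg{A} b_k \leq \sup S$ (since each $b_i\in S$) to conclude $\sup S = a$.

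The step I expect to require the most care is matching the syntactic shape of the modal clause to the semantics, and in particular justifying the bound $k\leq |A|$ and handling dead-end worlds. Finiteness of $\alg{A}$ is essential: $S$ is a subset of the finite set $A$, hence $\sup S$ is the join of at most $|A|$ of its members, which is precisely what licenses restricting to $k\leq |A|$ realised values. The boundary case is a world $w$ with no $R$-successors, where $S=\emptyset$ and $\semvalue{\Diamond\p}^\model{M}_w = 0_\alg{A}$ (dually $1_\alg{A}$ for $\Box$); this is accommodated by reading the disjunction over $k\leq|A|$ as including $k=0$, so that the empty join equals $0_\alg{A}$ and the empty conjunction of $\Diamond$-formulas is vacuously true, while the $\Box$-conjunct holds trivially. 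I would flag this convention explicitly, since without it $T^{0_\alg{A}}(\Diamond\p)$ would fail at dead-end worlds and break the equivalence.
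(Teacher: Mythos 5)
Your proof is correct and follows exactly the route the paper intends: the paper omits the argument, stating only that the lemma follows "by induction on the complexity of formulas," and your induction (base case from the definition of $V^*$, uniqueness of values for the connective case, and the two conjuncts of the modal clauses read as "some realised values join/meet to $a$" plus "$a$ bounds the successor values") is precisely that argument carried out in full. Your flag about dead-end worlds is a genuinely valuable addition rather than a deviation: the lemma does require reading the index $k \leq |A|$ as admitting $k=0$ (empty join $=0_\alg{A}$, empty meet $=1_\alg{A}$, empty conjunction of $\Diamond$-formulas true), since otherwise $T^{0_\alg{A}}(\Diamond\p)$ and $T^{1_\alg{A}}(\Box\p)$ would fail at successor-free worlds and the equivalence would break.
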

 
%

We can easily obtain the following axiomatization result for classical models of the form $\model{M}^*$:

\begin{Lem}\label{goodlem}
 Let $\alg{A}$ be a finite lattice algebra and $\tau$ a set of variables.  Consider the following set $T^*(\tau)\subseteq \fm_\alg{2}^{\Diamond \Box}(\tau^*)$ of (modality-free) formulas:

 \[
 \bigvee_{\substack{a \in A }}p_i^a, \,\,  \neg (p_i^a\land p_i^b) \,\,\,\,\,\, \,\,\,\,\,\,  (a, b \in A, a \neq b, p_i \in \tau). 
 \]

Then:
 
\begin{enumerate}
\item If\/ $\model{M}$ is an \alg{A}-valued model, then the formulas of $T^*(\tau)$ are true in every world of the \alg{2}-valued model $\model{M}^*$.
\item If\/  $\model{N}$ is a \alg{2}-valued model  for the language $\fm_\alg{2}^{\Diamond \Box}(\tau^*)$ that satisfies in each world all the formulas of\/  $T^*(\tau)$, we define a model\/ $\model{M}$ for the language $\fm_\alg{A}^{\Diamond \Box}(\tau)$ as follows:
 \begin{itemize}
 \item $\model{M}$ is based on the same frame as $\model{N}$,
 \item $V^{\model{M}} (p_i, w) = a$ iff  $\tuple{\model{N}, w} \models p_i^a$.
\end{itemize}
Then, $\model{N} = \model{M}^*$.
\end{enumerate}
\end{Lem}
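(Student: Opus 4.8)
The plan is to verify both parts directly from the definition of $\model{M}^*$, recalling that $V^*(p_i^a,w)=1$ iff $V(p_i,w)=a$. Since every formula in $T^*(\tau)$ is modality-free and involves only the fresh variables $p_i^a$, no induction on formula complexity is needed; both parts reduce to unwinding this single equivalence.

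For part~(1), I would fix a world $w$ and an index $i$. Because $V(p_i,w)$ is a single element of $A$, say $a_0$, the definition of $V^*$ yields $V^*(p_i^{a_0},w)=1$, so $\bigvee_{a\in A}p_i^a$ holds at $w$ in $\model{M}^*$. For the remaining family, were both $p_i^a$ and $p_i^b$ true at $w$ with $a\neq b$, we would get $V(p_i,w)=a$ and $V(p_i,w)=b$ at once, contradicting that $V$ is single-valued; hence $\neg(p_i^a\wedge p_i^b)$ holds at $w$. As $w$ and $i$ are arbitrary, all formulas of $T^*(\tau)$ are globally true in $\model{M}^*$.

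For part~(2), the first step is to confirm that the valuation $V^{\model{M}}$ is well defined, and this is exactly what the two clauses of $T^*(\tau)$ secure: for each $p_i$ and each $w$, the disjunction $\bigvee_{a\in A}p_i^a$ forces at least one $a$ with $\tuple{\model{N},w}\models p_i^a$, while the formulas $\neg(p_i^a\wedge p_i^b)$ with $a\neq b$ forbid more than one. Thus a unique $a\in A$ satisfies $\tuple{\model{N},w}\models p_i^a$, making $V^{\model{M}}\colon\tau\times W\to A$ a genuine valuation.

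It then remains to compare $\model{N}$ with $\model{M}^*$. They share the same underlying frame by construction, so it suffices to match their valuations on $\tau^*$. Chaining the relevant definitions gives, for every $p_i^a\in\tau^*$ and every $w$, the equivalences $V^{\model{M}^*}(p_i^a,w)=1$ iff $V^{\model{M}}(p_i,w)=a$ iff $\tuple{\model{N},w}\models p_i^a$ iff $V^{\model{N}}(p_i^a,w)=1$, so the two valuations coincide on every atom and $\model{N}=\model{M}^*$. The entire argument is bookkeeping; the only place asking for genuine attention is the well-definedness in part~(2), and there the role of each clause of $T^*(\tau)$ is transparent, so I anticipate no real obstacle.
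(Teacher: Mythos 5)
Your proof is correct, and it is exactly the routine verification the paper has in mind: the paper states this lemma without proof (introducing it with ``We can easily obtain\dots''), and your unwinding of the definition of $V^*$ for part~(1), together with the existence-plus-uniqueness argument for well-definedness of $V^{\model{M}}$ and the atom-by-atom identification of valuations in part~(2), is the intended argument. No gaps.
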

Observe that if $\tau$ is finite,  then $T^*(\tau)$ in Lemma \ref{goodlem} is finite as well.


%
%

\begin{Rmk}\em{
It is worth observing that the translation we have presented in this section allow us to give very quick proofs of certain  properties of many-valued modal logics on  crisp  frames. For example,  both compactness and the finite model property are inherited from two-valued modal logic.}

\end{Rmk}

\begin{Rmk}\em{Observe that the translation presented in this section can be adapted to cover a bit more than just logics based on finite lattice algebras. For instance, we can cover the case in which \alg{A} is infinite but $n$-compact in the sense that, for each $X \subseteq A $, $\bigwedge X = a_1 \wedge \dots \wedge a_n$ and $\bigvee X = b_1 \vee \dots \vee b_n$ for some $a_1, \dots, a_n, b_1, \dots, b_n \in A$
(for examples of these lattices, see \citep[Ex.~6]{C-TD-M 2019}}).
\end{Rmk}

\section{Modal frame definability}\label{main}
 
In this section, we will establish our main results. First, we will see that in an \alg{A}-valued modal logic we cannot define more  classes of crisp frames than are already definable in classical modal logic. 
Second, for a wide class of algebras, the converse also follows, namely, any class of frames  which is definable in two-valued modal logic will be modally \alg{A}-definable. 
 
\subsection{Modal \alg{A}-definability implies modal \alg{2}-definability}\label{ModalImpliesClassical}
Recall  that, for a modal formula $\f$ of $\fm_\alg{2}^{\Diamond \Box}(\tau^*)$, we have the classical property \citep[Prop.~4.3]{rij2} that the truth of $\f$ in any pointed  model $\tuple{\model{M}, w}$  depends only on the restriction  (denoted by $\model{M}| n$) of $\tuple{\model{M}, w}$ to worlds that can be reached from $w$ through $R$ in at most $n$ steps, where $n=\mathrm{rank} (\f)$, i.e., the modal rank of $\f$ (\citep[Def.~4.2]{rij2}). Observe that $\mathrm{rank} (\f) =  \mathrm{rank}(T^a(\f))$, since our translation does not increase the modal rank. Then, for such a formula, the Switch Lemma can be extended to:
\begin{center}
$\semvalue{\f}^\model{M}_w = a$ \ iff \ $\tuple{\model{M}^*, w} \models T^a(\f)$ \ iff \ $\tuple{\model{M}^*|n, w} \models T^a(\f)$.
\end{center}

\begin{Thm}[\citep{tho}, Theorem 8]\footnote{The methods in~\citep{tho} are substantially more complex than ours, which is why we produce our own proof for the benefit of the reader.}\label{t:AdefImplies2def} Let \alg{A} be a finite lattice algebra, let $\f$ be a formula  from $\fm_\alg{A}^{\Diamond \Box}(\tau)$ (assume w.l.o.g.\ that $\tau = \{p_1, \ldots, p_n\}$ is the finite set of variables that appear in the formula), and let\/ $\mathbb{F}$ be a class of frames. Then, $\f$ modally \alg{A}-defines $\mathbb{F}$ iff\/  $\mathbb{F}$ is modally $\alg{2}$-defined in $\fm_\alg{2}^{\Diamond \Box}(\tau^*)$ by

$$\f^*:= (\bigvee_{m \leq \mathrm{rank}(T^{1_\alg{A}}(\f))} \neg \Box^m ( \bigwedge T^*(\tau) ) )  \vee T^{1_\alg{A}}(\f). $$
 \end{Thm}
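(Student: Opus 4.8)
The plan is to reduce the whole biconditional to a single semantic identity: for every frame $\model{F}$, one has $\model{F} \models_\alg{A} \f$ if and only if $\model{F} \models_\alg{2} \f^*$. Once this identity is in hand the theorem is immediate, since $\f$ modally $\alg{A}$-defines $\mathbb{F}$ precisely when $\mathbb{F}$ equals the class of frames on which $\f$ is $\alg{A}$-valid, and $\f^*$ modally $\alg{2}$-defines $\mathbb{F}$ precisely when $\mathbb{F}$ equals the class of frames on which $\f^*$ is $\alg{2}$-valid; the identity says these two classes coincide, so each is $\mathbb{F}$ iff the other is. Throughout, set $r = \mathrm{rank}(T^{1_\alg{A}}(\f))$ and recall that $\f^*$ is the disjunction of $T^{1_\alg{A}}(\f)$ with the formulas $\neg \Box^m(\bigwedge T^*(\tau))$ for $m \leq r$, the latter asserting that the coherence constraints $T^*(\tau)$ fail at some world reachable from the current one in at most $r$ steps.

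For the direction $\model{F} \models_\alg{A} \f \Rightarrow \model{F} \models_\alg{2} \f^*$, I would fix an arbitrary $\alg{2}$-valued model $\model{N}$ on $\model{F}$ and a world $w$. If some disjunct $\neg\Box^m(\bigwedge T^*(\tau))$ holds at $w$, then $\f^*$ holds at $w$ trivially. Otherwise $\bigwedge T^*(\tau)$ holds at every world reachable from $w$ in at most $r$ steps. Here is the key construction: I would surgically redefine $\model{N}$ outside this $r$-ball --- for instance forcing $p_i^{1_\alg{A}}$ true and every other $p_i^a$ false at each such world --- to obtain a model $\model{N}'$ on the same frame that satisfies $\bigwedge T^*(\tau)$ at \emph{every} world while agreeing with $\model{N}$ on the $r$-ball. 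By Lemma~\ref{goodlem}(2), $\model{N}' = \model{M}^*$ for some $\alg{A}$-valued model $\model{M}$; since $\model{F} \models_\alg{A} \f$ forces $\semvalue{\f}^\model{M}_w = 1_\alg{A}$, the Switch Lemma yields $\tuple{\model{N}', w} \models T^{1_\alg{A}}(\f)$. As $T^{1_\alg{A}}(\f)$ has modal rank $r$ and $\model{N}'| r$ at $w$ coincides with $\model{N}| r$ at $w$, the locality property \citep[Prop.~4.3]{rij2} transfers this to $\tuple{\model{N}, w} \models T^{1_\alg{A}}(\f)$, whence $\tuple{\model{N}, w} \models \f^*$.

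The converse $\model{F} \models_\alg{2} \f^* \Rightarrow \model{F} \models_\alg{A} \f$ is the easy half. Given an $\alg{A}$-valued model $\model{M}$ on $\model{F}$ and a world $w$, Lemma~\ref{goodlem}(1) guarantees that $\model{M}^*$ satisfies $\bigwedge T^*(\tau)$ at every world, so $\Box^m(\bigwedge T^*(\tau))$ holds at $w$ and each disjunct $\neg\Box^m(\bigwedge T^*(\tau))$ of $\f^*$ is false there. Since $\tuple{\model{M}^*, w} \models \f^*$, the surviving disjunct $T^{1_\alg{A}}(\f)$ must hold, and the Switch Lemma returns $\semvalue{\f}^\model{M}_w = 1_\alg{A}$. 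As $w$ and $\model{M}$ were arbitrary, $\model{F} \models_\alg{A} \f$, which closes the identity.

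I expect the main obstacle to be the surgery step in the first direction: one must check carefully that overwriting the valuation off the $r$-ball really yields a globally $T^*(\tau)$-coherent model (so that Lemma~\ref{goodlem}(2) applies) without perturbing the $r$-ball, and that the locality property is invoked in exactly its stated form --- truth of a rank-$r$ formula at $w$ depending only on the restriction $\model{N}| r$ around $w$ --- so that the value computed in the global completion $\model{N}'$ is legitimately pulled back to the original $\model{N}$. The role of the disjunction over $\neg\Box^m(\bigwedge T^*(\tau))$ is precisely to absorb, harmlessly, those worlds of an arbitrary $\alg{2}$-model whose nearby valuations do not encode a genuine $\alg{A}$-valuation; everything else is routine bookkeeping with the Switch Lemma and Lemma~\ref{goodlem}.
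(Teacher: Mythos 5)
Your proposal is correct and follows essentially the same route as the paper's own proof: the same surgery construction (completing an arbitrary $\alg{2}$-valued model outside the $r$-ball with $p_i^{1_\alg{A}}$ true and all other $p_i^a$ false), the same appeal to Lemma~\ref{goodlem}, the Switch Lemma, and the locality property of rank-$r$ formulas. Your factoring of the biconditional through the single frame-level identity $\model{F} \models_\alg{A} \f \Leftrightarrow \model{F} \models_\alg{2} \f^*$ is merely a cleaner packaging of what the paper proves in four sub-claims (two of which it dispatches with ``reasoning as before''), not a different argument.
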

\begin{proof}  ($\Rightarrow$): Assume first that $\f$ modally \alg{A}-defines the frame  class $\mathbb{F}$. Our goal is then to show  the following two claims:
\begin{itemize}
\item[(1)] the formula  $\f^*$ is \alg{2}-valid in every frame from $\mathbb{F}$, and
\item[(2)] every frame where $\f^*$ is \alg{2}-valid belongs to the class $\mathbb{F}$.
\end{itemize}
To see (1), take any $\alg{2}$-valued model $\model{M}$ for  $\fm_\alg{2}^{\Diamond \Box}(\tau^*)$ based on a frame  $\mathfrak{F}$ from $\mathbb{F}$. $\f^*$ is a material implication, so assume that at an arbitrary world $w$ of $\model{M}$  all the antecedents of $\f^*$ hold.
Then, let $k = \mathrm{rank}(T^{1_\alg{A}}(\f))$ and consider the pointed model $\tuple{\model{M}|k, w}$. The theory 
 \[
 \bigvee_{\substack{a \in A }}p_i^a, \ \bigwedge_{\substack{a, b \in A \\ a \neq b}} \neg (p_i^a\land p_i^b) \,\,\,\,\,\, \,\,\,\,\,\,   (1\leq i \leq n)
 \]
globally holds in $\tuple{\model{M}|k, w}$ since that is what the hypothesis of all the antecedents of $\f^*$ holding means. Now take any model based on $\mathfrak{F}$ of the form $\model{N}^*$  for $\fm_\alg{2}^{\Diamond \Box}(\tau^*)$ such that $\tuple{\model{M}|k, w} \cong \tuple{\model{N}^*|k, w}$ corresponding to some $\model{N}$ for $\fm_\alg{A}^{\Diamond \Box}(\tau)$.  There is always one such model: e.g.\ interpret the predicates of $\tau^*$ for the worlds in  $\tuple{\model{M}|k, w}$ as in that model and in every other world let $p_i^{1_\alg{A}}$ hold and $p_i^a$ fail, for each $1\leq i \leq n$ and each $a \neq 1_\alg{A}$ . Now, by hypothesis, $\semvalue{\f}^\model{N}_w= 1_\alg{A}$, so, by the Switch Lemma,  $\tuple{\model{N}^*, w} \models T^{1_\alg{A}}(\f)$. Then, $\tuple{\model{N}^*|k, w} \models T^{ 1_\alg{A}}(\f)$, and so $\tuple{\model{M}|k, w} \models T^{1_\alg{A}}(\f)$, as desired.

In order to prove (2), suppose that  $\f^*$ is  \alg{2}-valid in a frame $\mathfrak{F}$. Take any model for $\fm_\alg{2}^{\Diamond \Box}(\tau^*)$ of the form $\model{M}^*$ based on  $\mathfrak{F}$ for some corresponding model $\model{M}$ for $\fm_\alg{A}^{\Diamond \Box}(\tau)$ also based on $\mathfrak{F}$. Since $\f^*$ is globally true in $\model{M}^*$ (and all the antecedents of $\f^*$ hold at any world), we must have that  $T^{1_\alg{A}}(\f)$ is globally true in $\model{M}^*$, and by the Switch Lemma, $\f$  is globally true in $\model{M}$. Since for any such  $\model{M}$ based on $\mathfrak{F}$ there is a corresponding $\model{M}^*$, it follows that $\f$ is $\alg{A}$-valid in $\mathfrak{F}$. Thus, $\mathfrak{F} \in \mathbb{F}$.

($\Leftarrow$): Assume now that $\f^*$ modally $\alg{2}$-defines the class $\mathbb{F}$. As before, we need to show the following two claims:
\begin{itemize}
\item[(1)] the formula  $\f$ is \alg{A}-valid in every frame in $\mathbb{F}$, and
\item[(2)] every frame where $\f$ is \alg{A}-valid is in the class $\mathbb{F}$.
\end{itemize}
To see (1), consider any \alg{A}-valued model $\model{M}$ based on a frame $\mathfrak{F} \in \mathbb{F}$. In the corresponding model $\model{M}^*$ for $\fm_\alg{2}^{\Diamond \Box}(\tau^*)$ then $T^{1_\alg{A}}(\f)$ is globally true (as $\f^*$ is), so by the Switch Lemma, $\f$ is globally true in  $\model{M}$, as desired. To see (2), suppose now that $\f$ is \alg{A}-valid in the frame $\mathfrak{F}$. Reasoning as before, by the Switch Lemma, this means that $\f^*$  is $\alg{2}$-valid in $\mathfrak{F}$, so $\mathfrak{F} \in \mathbb{F}$.
\end{proof}



\subsection{When does modal \alg{2}-definability imply modal \alg{A}-definability?}\label{ClassicalImpliesModal}
The aim of this subsection is to provide sufficient conditions for recovering the modal \alg{A}-definability of a class of frames from its modal \alg{2}-definability. Whether the conditions we provide are necessary or not is left as an open problem. In Theorem~\ref{t:AdefImplies2def} we have seen that, given a finite lattice algebra $\alg{A}$, any class of (crisp) frames definable by a formula of $\fm_\alg{A}^{\Diamond \Box}$ in the $\alg{A}$-valued associated modal logic  is definable by a formula\ of classical modal logic. To have the reciprocal, one would expect the algebra $\alg{A}$ to interpret classical logic in some sense.

Assume for the rest of this section that $\alg{A}$ has the following non-trivial lattice reduct\footnote{For the sake of lighter notation, in this section, we often drop the superindex $\alg{A}$ in the operations.} $$\text{Red}(\alg{A})=\tuple{A,\wedge ,\vee ,\0,\1,\lnot}$$ enriched with bottom element $\0$, top element $\1$, and $\lnot x= u(x)$, (where $u(x)$ is a distinguished unary term). An example is when $\alg{A}$ is a bounded residuated lattice and $\lnot x=x\rightarrow \0$. However, in general, we do not require $\alg{A}$ to be residuated.

In this context, we provide  a sufficient algebraic condition for formulas  in the language $\fm_\alg{A}^{\Diamond \Box}$ to \alg{A}-define any class of frames definable in classical modal logic. We start by introducing a useful definition:

\begin{Def} A $\{\vee ,\wedge ,\1,\lnot \}$-algebra $\alg{B}$ is said to be \emph{interpretable in} $\alg{A}$ \emph{via a unary} $\fm_{\alg{A}}$\emph{-term} $t(x)$
if 
\begin{itemize}
\item $Eq(t^{\alg{A}})=\{\tuple{x,y}\in A^{2} \mid t^{\alg{A}}(x)=t^{\alg{A}}(y)\}$ is a congruence of $\text{Red}(\alg{A})$, 

\item $t^{\alg{A}}(\1^{\alg{A}})=\1^{\alg{A}}$, and

\item $\text{\emph{Red}}(\alg{A})/Eq(t^{\alg{A}})\,$ is isomorphic to $\alg{B}$.
\end{itemize}

Equivalently, $t(x)\wedge ^{\alg{B}'}t(y):=t^{\alg{A}}(x\wedge y),$ $t(x)\vee
^{\alg{B}'}t(y):=t^{\alg{A}}(x\vee y),$ \ $\lnot ^{\alg{B}'}t(x):=t^{\alg{A}}(\lnot x)$ are well-defined operations in $t^{\alg{A}}(A),$ and $\alg{B}$ is isomorphic to the algebra
\begin{equation*}
\alg{B'}=\tuple{t^{\alg{A}}(A),\wedge^{\alg{B}'},\vee ^{\alg{B}'},\lnot ^{\alg{B}'},\1^{\alg{A}}}.
\end{equation*}
\end{Def}

Clearly, $t^{\alg{A}} \colon A \rightarrow B'$ is an epimorphism and $\alg{B}'$ is a lattice with top element $\1^\alg{A}\!.$ If $\alg{A}$ is bounded, then so is $\alg{B}'$ and $\0^{\alg{B}'}=t^\alg{A}(\0^{\alg{A}}).$

\begin{Exm}\label{ex5}\em{
Any pseudocomplemented lattice $\alg{A}$ interprets
via $t(x)=\lnot \lnot x$ its algebra of \emph{regular elements} $\mathrm{Reg}(\alg{A})=\{x\in A\mid\lnot \lnot x=x\},$ which happens to be a Boolean algebra.
 To see this, notice that $\mathrm{Reg}(\alg{A})=\{\lnot \lnot x \mid x \in A\}$ because $\lnot \lnot \lnot
\lnot x=\lnot \lnot x,$ and $\lnot \lnot \1=\1.$ Moreover, $\lnot \lnot
x=\lnot \lnot y$ is a congruence since $\lnot \lnot x=\lnot
\lnot x^{\prime }$  and $\lnot \lnot y=\lnot
\lnot y^{\prime }$ imply: \medskip

$\lnot \lnot (x\wedge y)=\lnot \lnot (\lnot \lnot x\wedge \lnot \lnot
y)=\lnot \lnot (\lnot \lnot x^{\prime }\wedge \lnot \lnot y^{\prime })=\lnot
\lnot (x'\wedge y'),$

$\lnot \lnot (x\vee y)=\lnot \lnot (\lnot \lnot x\vee \lnot \lnot y)=\lnot
\lnot (\lnot \lnot x^{\prime }\vee \lnot \lnot y^{\prime })=\lnot \lnot
(x'\vee y').$ \medskip

\noindent Therefore, in $B=\mathrm{Reg}(\alg{A})$ we have the following induced operations: \medskip

$x\wedge ^{\alg{B}}y:=\lnot \lnot (x\wedge y)=\lnot \lnot x\wedge \lnot \lnot y =x\wedge y$

$x\vee ^{\alg{B}}y:=\lnot \lnot (x\vee y)\ $(no further reduction is possible)

$\lnot ^{\alg{B}}x:=\lnot \lnot (\lnot x)=\lnot x$

$\1^{\alg{B}}:=\1,$ $\0^{\alg{B}}:=\lnot \lnot \0=\0.$\medskip

\noindent and $\mathrm{Reg}(\alg{A})$ is a Boolean algebra because $x\wedge ^{\alg{B}}\lnot
x=\lnot \lnot \0=\0$ and $x\vee ^{\alg{B}}\lnot x=\lnot \lnot (x\vee \lnot x)=\1$ by the
density of $x\vee \lnot x$ in pseudocomplemented lattices (see \citep{B 2006}
for the identities utilized). In particular, any Heyting algebra \alg{A} interprets a Boolean algebra in this way.
}
\end{Exm}

\begin{Rmk}\em{ In Example \ref{ex5}, $\mathrm{Reg}(\alg{A})$ is not necessarily a subalgebra of $\alg{A}$ (because of disjunction), but it contains as a subalgebra its \emph{Boolean skeleton} ${\bf B}(\alg{A})$. These algebras coincide if and only if $\alg{A}$ is a Stone algebra (see~\citep{C-D-T 2015}).}\end{Rmk}

\begin{Exm}\label{ex6}\em{Any finite \MV-algebra $\alg{A}$ interprets its
Boolean skeleton ${\bf B}(\alg{A}),$ via $t(x)=nx$ for any $n\geq |A|$, because $n\alg{A}={\bf B}(\alg{A})$
and $n(\cdot) \colon \alg{A}\rightarrow \alg{A}$ is an endomorphism thanks to the validity of the following equations:\medskip

$n(x\wedge y) \approx nx\wedge ny$

$n(x\vee y) \approx nx\vee ny$

$n(\lnot x) \approx \lnot nx$

$n\1 \approx\1,$ $n\0 \approx\0.$\medskip

\noindent To see this, consider a subdirect representation $\alg{A}\subseteq \Pi
_{i\in I}\alg{C}_{i}$, where the $\alg{C}_{i}$ are finite \MV-chains. Each chain has length at most $n$ and thus it is easily verified by cases that $nx\in \{\0^{\alg{C}_i},\1^{\alg{C}_i}\}$ in each chain $\alg{C}_{i}$; therefore $n\alg{A}\subseteq {\bf B}(\alg{A}).$ Moreover, if $x=\tuple{x_{i}}_{i}\in {\bf B}(\alg{A})$,
then $x_{i}\in {\bf B}(\alg{C}_i)=\{\0^{\alg{C}_i},\1^{\alg{C}_i}\}.$ Therefore, $nx_{i}=x_{i}$ for each $i \in I$, and thus $x=nx\in n\alg{A}.$}

Notice that if $\alg{A}$ is a Glivenko bounded residuated lattice in the sense of \citep{C-T 2004}, then $\alg{A}$ interprets $\mathrm{Reg}(\alg{A})$ via $\lnot \lnot$. Although this algebra is not necessarily Boolean, in some cases it is an \MV-algebra and, thus, it interprets a Boolean algebra. 
\end{Exm}

\begin{Exm}\label{ex7}\em{By  \citep[Thm. 2]{tu}, in every \BL-algebra one can define an  \MV-algebra, and then, in turn, one can interpret a Boolean algebra as remarked before.}\end{Exm}

We are interested in term interpretations of Boolean algebras due to the following result:

 \begin{Thm}\label{lem1} Let \alg{A} be a finite lattice algebra which interprets
via a term t(x) a Boolean algebra, let  $\f$ be a formula from $\fm_\alg{2}^{\Diamond \Box}(\tau)$ and let $\mathbb{F}$ be a class of frames.
Then, $\f$ modally \alg{2}-defines $\mathbb{F}$ \ iff \ $t(\f)$  modally $\alg{A}$-defines $\mathbb{F}$.
 \end{Thm}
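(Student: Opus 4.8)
The plan is to reduce the biconditional to a single frame-wise validity equivalence and then transport truth-values along lattice homomorphisms. Since ``$\f$ modally $\alg{2}$-defines $\mathbb{F}$'' means $\mathbb{F}=\{\model{F}\mid \model{F}\models_\alg{2}\f\}$ and ``$t(\f)$ modally $\alg{A}$-defines $\mathbb{F}$'' means $\mathbb{F}=\{\model{F}\mid \model{F}\models_\alg{A}t(\f)\}$, it suffices to prove that these two frame classes coincide, i.e.
\begin{equation*}
(\star)\qquad \model{F}\models_\alg{2}\f \ \iff\ \model{F}\models_\alg{A}t(\f)\qquad(\text{for every frame }\model{F}).
\end{equation*}
Indeed, once $(\star)$ holds, $\f$ defines $\mathbb{F}$ over $\alg{2}$ exactly when $\mathbb{F}$ equals this common class, which is exactly when $t(\f)$ defines $\mathbb{F}$ over $\alg{A}$.

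The crux is a \emph{transfer lemma}: if $h\colon\alg{C}\to\alg{D}$ is a homomorphism between finite lattice algebras preserving $\wedge,\vee,\lnot,\0,\1$, then for every classical modal formula $\f\in\fm_\alg{2}^{\Diamond \Box}(\tau)$ (read as a formula of $\fm_\alg{A}^{\Diamond \Box}(\tau)$, which uses only $\wedge,\vee,\lnot,\Diamond,\Box$), every $\alg{C}$-valued model $\model{M}$, and every world $w$, one has $h(\semvalue{\f}^\model{M}_w)=\semvalue{\f}^{h\circ\model{M}}_w$, where $h\circ\model{M}$ is the $\alg{D}$-valued model on the same frame with valuation $h\circ V^\model{M}$. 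I would prove this by induction on $\f$: the propositional cases are immediate from $h$ being a homomorphism, and the modal cases follow because in a finite lattice the $\sup$ and $\inf$ interpreting $\Diamond$ and $\Box$ range over the finitely many distinct successor values and are therefore finite joins and meets, which $h$ preserves; the empty-accessibility case is handled by $h(\0)=\0$ and $h(\1)=\1$.

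Granting the transfer lemma, I would instantiate it twice. First, with $h=t^\alg{A}$, which by the definition of interpretability is precisely a homomorphism of $\text{Red}(\alg{A})$ onto $\alg{B}'$ preserving $\wedge,\vee,\lnot,\0,\1$. Since $\semvalue{t(\f)}^\model{M}_w=t^\alg{A}(\semvalue{\f}^\model{M}_w)$ and $\1^{\alg{B}'}=\1^\alg{A}$, the lemma gives $\semvalue{t(\f)}^\model{M}_w=\semvalue{\f}^{t^\alg{A}\circ\model{M}}_w$; and as $t^\alg{A}$ is onto, every $\alg{B}'$-valued model on $\model{F}$ arises as $t^\alg{A}\circ\model{M}$ (choose preimages pointwise). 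Hence $t(\f)$ is $\alg{A}$-valid in $\model{F}$ iff $\f$ is $\alg{B}'$-valid in $\model{F}$. Second, I would show that for classical $\f$, $\alg{B}'$-validity and $\alg{2}$-validity in $\model{F}$ agree: the inclusion $\alg{2}\hookrightarrow\alg{B}'$ is a homomorphism of the required kind, so the lemma turns any $\alg{2}$-counterexample into a $\alg{B}'$-counterexample; conversely, if $\semvalue{\f}^\model{P}_w\neq\1^{\alg{B}'}$ for some $\alg{B}'$-model $\model{P}$, then, $\alg{B}'$ being a finite Boolean algebra, there is a homomorphism $h\colon\alg{B}'\to\alg{2}$ with $h(\semvalue{\f}^\model{P}_w)=0$, and the lemma yields $\semvalue{\f}^{h\circ\model{P}}_w=0$, a $\alg{2}$-counterexample on $\model{F}$. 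Chaining the two equivalences gives $(\star)$.

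The main obstacle is this last step, where the passage between $\alg{B}'$-validity and $\alg{2}$-validity uses crucially that $\alg{B}'$ is a \emph{genuine} Boolean algebra: this is what guarantees that any non-top truth-value can be separated from the top by a homomorphism into $\alg{2}$ (an ultrafilter argument, trivial here since $\alg{B}'$ is finite). This is exactly where the hypothesis that $\alg{A}$ interprets a Boolean algebra is indispensable---for an arbitrary quotient the two notions of validity would in general diverge. Everything else is routine, the only point needing care being the verification that finite lattice homomorphisms commute with the $\sup$/$\inf$ interpreting $\Diamond$ and $\Box$, including the empty-successor edge case.
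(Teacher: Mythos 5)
Your proposal is correct and takes essentially the same route as the paper: it factors the equivalence through the interpreted Boolean algebra exactly as the paper does, with your first instantiation (the epimorphism $t^{\alg{A}}$, plus surjectivity to lift $\alg{B}'$-valued models to $\alg{A}$-valued ones) being Proposition~\ref{key} with $\Gamma=\emptyset$, and your second step (the embedding $\alg{2}\hookrightarrow\alg{B}'$ together with separating homomorphisms $\alg{B}'\to\alg{2}$) being Proposition~\ref{p:FiniteBoolean} with $\Gamma=\emptyset$, since the projections and diagonal of the paper's decomposition $\alg{B}\cong\alg{2}^n$ are precisely your homomorphisms. The only differences are presentational: you merge the paper's two inductions into a single transfer lemma and argue at the level of validity only, whereas the paper proves both steps for arbitrary global consequence (which it reuses later, e.g.\ for Corollary~\ref{c:Translation-Classical-to-ManyValued} and the complexity results).
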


Then, putting this together with Theorem~\ref{t:AdefImplies2def}, we obtain:

\begin{Cor}\label{c:SameDefinableFrames}
Let \alg{A} be a finite lattice algebra which interprets via a term $t(x)$ a Boolean algebra. Then, the class of modally \alg{A}-definable frames coincides with the class of modally \alg{2}-definable frames.
\end{Cor}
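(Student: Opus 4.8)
The plan is to establish the meta-level equivalence underlying the statement: a frame class $\mathbb{F}$ is modally $\alg{A}$-definable if and only if it is modally $\alg{2}$-definable. Since \emph{modal definability} is phrased through an arbitrary \emph{set} of formulas, whereas Theorems~\ref{t:AdefImplies2def} and~\ref{lem1} both speak of a single formula, the first thing I would record is the elementary bookkeeping fact that bridges the two. If $\Phi = \{\f_i \mid i \in I\}$ and, for each $i$, $\mathbb{F}_i$ denotes the class of frames on which $\f_i$ is valid (i.e.\ the class that the singleton $\{\f_i\}$ defines), then $\Phi$ defines exactly $\bigcap_{i \in I} \mathbb{F}_i$; dually, if each $\mathbb{F}_i$ is defined by a set $\Psi_i$, then $\bigcap_{i \in I} \mathbb{F}_i$ is defined by $\bigcup_{i \in I} \Psi_i$. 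This holds verbatim in both the $\alg{A}$-valued and the $\alg{2}$-valued settings, directly from the definition of modal definability. Hence it suffices to transfer definability one formula at a time and then take unions of the resulting defining sets.

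For the inclusion that every modally $\alg{A}$-definable class is modally $\alg{2}$-definable, suppose $\Phi \subseteq \fm_\alg{A}^{\Diamond \Box}(\tau)$ modally $\alg{A}$-defines $\mathbb{F}$. For each $\f \in \Phi$, let $\tau_\f$ be the finite set of variables occurring in $\f$ and let $\f^*$ be the classical formula produced by Theorem~\ref{t:AdefImplies2def} (built over $\tau_\f^*$). By that theorem the singleton $\{\f^*\}$ modally $\alg{2}$-defines precisely the class $\mathbb{F}_\f$ of frames on which $\f$ is $\alg{A}$-valid. Applying the bookkeeping fact, the set $\Phi^* = \{\f^* \mid \f \in \Phi\} \subseteq \fm_\alg{2}^{\Diamond \Box}(\tau^*)$ modally $\alg{2}$-defines $\bigcap_{\f \in \Phi} \mathbb{F}_\f = \mathbb{F}$, as required.

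For the reverse inclusion, suppose $\Psi \subseteq \fm_\alg{2}^{\Diamond \Box}(\tau)$ modally $\alg{2}$-defines $\mathbb{F}$. Here I would invoke the standing hypothesis that $\alg{A}$ interprets a Boolean algebra via the unary term $t(x)$, which is exactly what makes Theorem~\ref{lem1} applicable. For each $\psi \in \Psi$ that theorem yields that $\{t(\psi)\}$ modally $\alg{A}$-defines the class $\mathbb{F}_\psi$ of frames on which $\psi$ is $\alg{2}$-valid; observe that $t(\psi) \in \fm_\alg{A}^{\Diamond \Box}(\tau)$, since substituting $\psi$ for $x$ in the unary $\fm_\alg{A}$-term $t$ does not enlarge the variable set. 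Taking unions once more, $\{t(\psi) \mid \psi \in \Psi\}$ modally $\alg{A}$-defines $\bigcap_{\psi \in \Psi} \mathbb{F}_\psi = \mathbb{F}$.

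Both inclusions together give the claimed coincidence of the two families of definable frame classes. The only point requiring any care—and hence the main (mild) obstacle—is precisely the passage from the single-formula statements of the two theorems to the set-based notion of definability; once the intersection/union correspondence above is isolated, the corollary is immediate, and no further interaction with the internal structure of the translations $\f \mapsto \f^*$ or $\f \mapsto t(\f)$ is needed. A secondary bookkeeping remark is that in the forward direction distinct formulas $\f$ carry distinct finite variable sets $\tau_\f$, so the translated formulas $\f^*$ live over varying $\tau_\f^*$; this causes no difficulty, as all of them are formulas of the single classical language $\fm_\alg{2}^{\Diamond \Box}$ over the full starred variable set.
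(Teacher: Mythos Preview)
Your proposal is correct and follows exactly the route the paper intends: the corollary is stated immediately after Theorem~\ref{lem1} with only the remark ``putting this together with Theorem~\ref{t:AdefImplies2def}'', so the paper gives no explicit argument at all. Your write-up simply makes precise the one point the paper leaves tacit, namely the passage from the single-formula statements of Theorems~\ref{t:AdefImplies2def} and~\ref{lem1} to definability by arbitrary \emph{sets} of formulas via the intersection/union bookkeeping; this is entirely routine and your handling of the varying finite variable sets $\tau_\f$ is the correct observation.
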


\begin{Exm} \em{Thanks to the previous examples, Corollary~\ref{c:SameDefinableFrames} implies that the modal logic associated to a finite pseudocomplemented lattice or to a finite \MV-algebra defines the same class of
crisp frames as classical modal logic. In particular, this result includes the modal extensions of \L ukasiewicz finitely-valued logics studied by Teheux in \citep{te} and solves the open problem left in that paper of determining whether their definable classes of frames coincide with those definable in classical modal logic.}
\end{Exm}

The rest of this subsection is devoted to proving Theorem~\ref{lem1}. We start by showing that the two-valued global consequence of modal formulas is preserved when allowing models to take values on an arbitrary finite Boolean algebra.

\begin{Pro}\label{p:FiniteBoolean}
Let $\mathfrak{F}$ be a frame and $\alg{B}$ be a finite Boolean algebra. Then, for each set of formulas $\Gamma \cup \{\f\} \subseteq \fm_\alg{2}^{\Diamond \Box}(\tau)$, we have:
\[
\Gamma \vDash_{\mathrm{Log}(\{\mathfrak{F}\},\alg{B},\tau)} \f \text{\, if and only if }
\text{\ }\Gamma \vDash_{\mathrm{Log}(\{\mathfrak{F}\},\alg{2},\tau)} \f .
\]
In particular, when $\Gamma = \emptyset$, we obtain that $\f$  modally $\alg{2}$-defines and $\alg{B}$-defines the same class of frames.
\end{Pro}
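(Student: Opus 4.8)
The plan is to reduce everything to the structure theorem for finite Boolean algebras: any finite Boolean algebra $\alg{B}$ is isomorphic to a finite power $\alg{2}^I$ of the two-element algebra, where $I$ indexes the atoms of $\alg{B}$. Under this identification the coordinate projections $\pi_i\colon\alg{B}\to\alg{2}$ and the diagonal embedding $\delta\colon\alg{2}\to\alg{B}$, $\delta(x)=(x)_{i\in I}$, are all Boolean homomorphisms; moreover $\delta(\1^\alg{2})=\1^\alg{B}$ and each $\pi_i(\1^\alg{B})=\1^\alg{2}$, while $\delta$ is injective and the family $(\pi_i)_{i\in I}$ jointly separates the points of $\alg{B}$. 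These two maps are all I will need to move models back and forth between $\alg{2}$ and $\alg{B}$ over the fixed frame $\mathfrak{F}$.

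The heart of the argument is a homomorphism-transfer lemma. I would prove, by induction on the complexity of $\f\in\fm_\alg{2}^{\Diamond\Box}(\tau)$, that for any homomorphism $h$ of finite Boolean algebras and any model $\model{M}$ on $\mathfrak{F}$, the pointwise-composed model $\model{M}^h$ (same frame, valuation $h\circ V$) satisfies $\semvalue{\f}^{\model{M}^h}_w=h(\semvalue{\f}^\model{M}_w)$ at every world $w$. The propositional-variable and connective cases are immediate from the definitions and from $h$ being a homomorphism. The only delicate clauses are those for $\Diamond$ and $\Box$, where one must pass $h$ through the $\sup$ and $\inf$ over $\{\semvalue{\p}^\model{M}_v\mid Rwv\}$. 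Here I would use that this set of truth values is \emph{finite} (as $A$ is finite) and that $h$, being a homomorphism of finite bounded lattices, preserves finite joins, finite meets and the bounds $\0,\1$, so it commutes with $\sup$ and $\inf$ even for the empty set (the case of dead-end worlds, where $\sup=\0$ and $\inf=\1$). This finiteness-plus-bound-preservation check is the one step I expect to require genuine care.

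With the lemma in hand both implications are short. For ($\Rightarrow$), assuming $\Gamma\vDash_{\mathrm{Log}(\{\mathfrak{F}\},\alg{B},\tau)}\f$, I take a $\alg{2}$-model $\model{N}$ on $\mathfrak{F}$ globally satisfying $\Gamma$ and apply the lemma to $\delta$: each $\psi\in\Gamma$ has value $\delta(\1^\alg{2})=\1^\alg{B}$ everywhere in $\model{N}^\delta$, so the hypothesis gives $\delta(\semvalue{\f}^\model{N}_w)=\1^\alg{B}$ for all $w$, and injectivity of $\delta$ forces $\semvalue{\f}^\model{N}_w=\1^\alg{2}$, i.e.\ $\f$ is globally true in $\model{N}$. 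For ($\Leftarrow$), assuming $\Gamma\vDash_{\mathrm{Log}(\{\mathfrak{F}\},\alg{2},\tau)}\f$, I take a $\alg{B}$-model $\model{M}$ on $\mathfrak{F}$ globally satisfying $\Gamma$ and apply the lemma to each $\pi_i$: every $\psi\in\Gamma$ is globally true in the $\alg{2}$-model $\model{M}^{\pi_i}$, so the hypothesis yields $\pi_i(\semvalue{\f}^\model{M}_w)=\1^\alg{2}$ for every $i$ and $w$; since the $\pi_i$ separate points and $\1^\alg{B}$ is the unique element with all coordinates equal to $\1$, we conclude $\semvalue{\f}^\model{M}_w=\1^\alg{B}$, as required.

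Finally, the ``in particular'' clause follows by specializing to $\Gamma=\emptyset$: for every frame $\mathfrak{F}$, the equivalence says that $\f$ is $\alg{B}$-valid in $\mathfrak{F}$ exactly when it is $\alg{2}$-valid in $\mathfrak{F}$, so the class of frames in which $\f$ is valid is the same whether validity is computed over $\alg{B}$ or over $\alg{2}$; that is, $\f$ modally $\alg{2}$-defines and $\alg{B}$-defines the same class of frames.
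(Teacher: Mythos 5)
Your proposal is correct and takes essentially the same route as the paper's proof: both identify $\alg{B}$ with a finite power $\alg{2}^n$ of the two-element algebra and transfer evaluation over the fixed frame along the coordinate projections (for the direction from $\alg{2}$-consequence to $\alg{B}$-consequence) and along the diagonal embedding (for the converse). Your homomorphism-transfer lemma is a mild repackaging of the paper's ``easy induction'' that evaluation in an $\alg{2}^n$-valued model is computed coordinatewise, with the same (correctly handled) point of care at the modal clauses, namely that the relevant sups and infs reduce to finite joins and meets, including the empty ones at dead-end worlds.
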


\begin{proof}
Without loss of generality, we may identify $\alg{B}$ with a power $\alg{2}^{n}$ of the two-element algebra, and thus any valuation $V \colon \tau\times W\rightarrow B$ has the form $V(p,w)=\tuple{V_i(p,w)}_{i=1}^n$ where $V_i\colon \tau\times W\rightarrow 2$. Take a model $\model{M}= \tuple{\mathfrak{F},V}$. An easy induction shows that for any formula $\p$ and any $w\in W,$
\[
\semvalue{\p}_{w}^\model{M}=\tuple{\semvalue{\p}_{w}^{\tuple{\mathfrak{F},V_i}}}_{i=1}^n.
\]
Assume that $\Gamma \vDash_{\mathrm{Log}(\{\mathfrak{F}\},\alg{2},\tau)} \f$ and $\Gamma$ is globally true in a $\alg{B}$-valued model $\model{M}= \tuple{\mathfrak{F},V}$. That is, for each $\p \in \Gamma$ and each $w \in W$,  $\semvalue{\p}_{w}^\model{M}=1_\alg{B}=\tuple{1}_{i =1}^n$, and hence $\semvalue{\p}_{w}^{\tuple{\mathfrak{F},V_i}}=1$. Therefore, $\semvalue{\f}_{w}^{\tuple{\mathfrak{F},V_i}}=1$ for each $i \in \{1,\ldots, n\}$ and each $w\in W$, i.e.\ $\f$ is globally true in $\model{M}$ as desired. Reciprocally, assume that $\Gamma \vDash_{\mathrm{Log}(\{\mathfrak{F}\},\alg{B},\tau)} \f $ and $\Gamma$ is globally true in a classical model $\model{M}= \tuple{\mathfrak{F},V}$. Then, the diagonal valuation $V^{\prime}(p,w)=\tuple{V(p,w)}_{i=1}^n$ only gives values in $\{\tuple{1}_{i =1}^n,\tuple{0}_{i =1}^n\}\subseteq B$ and is such that $\semvalue{\p}_{w}^{\tuple{\mathfrak{F},V'}}=\tuple{1}_{i =1}^n = 1_\alg{B}$ for each $\p \in \Gamma$ and each $w \in W$. By hypothesis, $\semvalue{\f}_{w}^{\tuple{\mathfrak{F},V'}}=\tuple{\semvalue{\f}_{w}^{\tuple{\mathfrak{F},V}}}_{i=1}^n=\tuple{1}_{i =1}^n$ for each $w \in W$, and thus $\semvalue{\f}_{w}^{\model{M}}=1$ for each $w \in W$.
\end{proof}

The next step consists in obtaining a similar preservation result when changing the algebra via the notion of interpretability defined above.
 
\begin{Pro}\label{key}
Let $\mathfrak{F}$ be a frame and assume that a finite lattice algebra $\alg{A}$ interprets a $\{\vee ,\wedge ,\1,\lnot \}$-algebra $\alg{B}$ via a unary term $t(x)$. Then, for each set of formulas $\Gamma \cup \{\f\} \subseteq \fm_\alg{2}^{\Diamond \Box}(\tau)$, we have:
\begin{equation*}
\Gamma \vDash_{\mathrm{Log}(\{\mathfrak{F}\},\alg{B},\tau)}\f \,\,\, \text{iff} \,\,\, t[\Gamma] \vDash_{\mathrm{Log}(\{\mathfrak{F}\},\alg{A},\tau)}t(\f).
\end{equation*}
In particular, when $\Gamma = \emptyset$, we obtain that the class of frames modally $\alg{B}$-defined by $\f$ coincides with the class of frames modally $\alg{A}$-defined by $t(\f)$.
\end{Pro}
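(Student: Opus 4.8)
The plan is to push the algebraic interpretation map up through the modal evaluation. Recall that $t^{\alg{A}} \colon \text{Red}(\alg{A}) \to \alg{B'}$ is a surjective homomorphism of $\{\vee,\wedge,\1,\lnot\}$-algebras with $t^{\alg{A}}(\1^{\alg{A}}) = \1^{\alg{A}}$ and, since $\alg{A}$ is finite (hence bounded), $t^{\alg{A}}(\0^{\alg{A}}) = \0^{\alg{B'}}$. As isomorphic algebras determine the same global logic, I first replace $\alg{B}$ by $\alg{B'} = \tuple{t^{\alg{A}}(A),\wedge^{\alg{B'}},\vee^{\alg{B'}},\lnot^{\alg{B'}},\1^{\alg{A}}}$ and argue with it throughout.

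The core of the argument is a commutation lemma. For an $\alg{A}$-valued model $\model{M}=\tuple{\mathfrak{F},V}$, let $t(\model{M})=\tuple{\mathfrak{F},t^{\alg{A}}\circ V}$ be the $\alg{B'}$-valued model over the same frame whose valuation is $V$ post-composed with $t^{\alg{A}}$. I would prove, by induction on the complexity of $\psi\in\fm_\alg{2}^{\Diamond\Box}(\tau)$, that $\semvalue{\psi}^{t(\model{M})}_w=t^{\alg{A}}(\semvalue{\psi}^{\model{M}}_w)$ at every world $w$. The base case and the connective cases ($\wedge,\vee,\lnot$) are immediate from $t^{\alg{A}}$ being a homomorphism. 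I also need the routine substitution lemma $\semvalue{t(\f)}^{\model{M}}_w=t^{\alg{A}}(\semvalue{\f}^{\model{M}}_w)$, which holds because $t(x)$ is modality-free and evaluation is compositional.

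The step I expect to be the main obstacle is the modal case of the commutation lemma, because $t^{\alg{A}}$ is only assumed to preserve the \emph{finitary} operations $\vee,\wedge$, whereas $\Diamond$ and $\Box$ are defined by $\sup$ and $\inf$ over the possibly infinite successor set of $w$. The observation that removes the difficulty is the finiteness of $\alg{A}$: the set $\{\semvalue{\psi}^{\model{M}}_v\mid Rwv\}$ is a \emph{finite} subset of $A$, so its supremum is a finite join and is therefore preserved by the lattice homomorphism $t^{\alg{A}}$; dually for the infimum and $\Box$. The degenerate case of a world with no successors, where the value is $\0^{\alg{A}}$ for $\Diamond$ and $\1^{\alg{A}}$ for $\Box$, is handled by $t^{\alg{A}}(\0^{\alg{A}})=\0^{\alg{B'}}$ and $t^{\alg{A}}(\1^{\alg{A}})=\1^{\alg{B'}}$.

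With these lemmas in hand the equivalence assembles routinely, recalling $\1^{\alg{A}}=\1^{\alg{B'}}$. For the forward direction, assume $\Gamma\vDash_{\mathrm{Log}(\{\mathfrak{F}\},\alg{B'},\tau)}\f$ and take an $\alg{A}$-valued $\model{M}$ based on $\mathfrak{F}$ in which $t[\Gamma]$ is globally true; the substitution and commutation lemmas turn this into global truth of $\Gamma$ in $t(\model{M})$, so $\f$ is globally true there, and reading the same equalities backwards gives that $t(\f)$ is globally true in $\model{M}$. For the converse, given a $\alg{B'}$-valued $\model{N}$ over $\mathfrak{F}$ validating $\Gamma$, the surjectivity of $t^{\alg{A}}$ lets me lift $\model{N}$ to an $\alg{A}$-valued $\model{M}$ with $t(\model{M})=\model{N}$; applying $t[\Gamma]\vDash_{\mathrm{Log}(\{\mathfrak{F}\},\alg{A},\tau)}t(\f)$ to $\model{M}$ and transporting through $t^{\alg{A}}$ shows $\f$ globally true in $\model{N}$. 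The final clause is the instance $\Gamma=\emptyset$, where the equivalence reads ``$\f$ is $\alg{B'}$-valid in $\mathfrak{F}$ iff $t(\f)$ is $\alg{A}$-valid in $\mathfrak{F}$'' for every frame, i.e.\ $\f$ and $t(\f)$ define the same frame class.
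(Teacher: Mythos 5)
Your proposal is correct and takes essentially the same route as the paper's proof: the paper likewise passes from an $\alg{A}$-valued model $\model{M}=\tuple{\mathfrak{F},V}$ to the model with valuation $t^{\alg{A}}\circ V$, establishes by induction that $\semvalue{\p}^{\model{M}'}_w=t^{\alg{A}}(\semvalue{\p}^{\model{M}}_w)=\semvalue{t(\p)}^{\model{M}}_w$, and in the converse direction lifts a $\alg{B}$-valued valuation along the surjection $t^{\alg{A}}$. Your extra detail on the modal induction step (finiteness of $\alg{A}$ turning the sups and infs into finite joins and meets preserved by the homomorphism, plus the successor-free case) is precisely what the paper leaves implicit, and you handle it correctly.
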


\begin{proof}
For any valuation $V \colon \tau\times W \rightarrow A$ (forming an \alg{A}-valued model $\model{M}$ based on $\mathfrak{F}$), let $V^{\ast }=t^{\alg{A}}\circ V \colon \tau\times W\rightarrow B$ (forming a \alg{B}-valued model $\model{M}' $ based on $\mathfrak{F}$).  Then, one can show by induction on the complexity of formulas $\p \in \fm_\alg{2}^{\Diamond \Box}(\tau)$ that for any $w\in W$:
\begin{equation*}
\semvalue{\p}^{\model{M}'}_w=t^{\alg{A}}(\semvalue{\p}^{\model{M}}_w)=   \semvalue{t(\p)}^{\model{M}}_w\in B.\text{ }
\end{equation*}
Assume that $\Gamma \vDash_{\mathrm{Log}(\{\mathfrak{F}\},\alg{B},\tau)}\f$ and $t[\Gamma]$ is globally true in an $\alg{A}$-valued model $\model{M}= \tuple{\mathfrak{F},V}$. Then, by the previous observation, $\Gamma$ is globally true in the $\alg{B}$-valued model $\model{M}'$ and, hence, so is $\f$. Therefore, $t(\f)$ is globally true in $\model{M}$.

Reciprocally, assume that $t[\Gamma] \vDash_{\mathrm{Log}(\{\mathfrak{F}\},\alg{A},\tau)}t(\f)$. Suppose now that $\Gamma$ is globally true in a $\alg{B}$-valued model $\model{N} = \tuple{\mathfrak{F},V_B}$. Choose an \alg{A}-valued $\model{M}$ with a valuation $V\colon \tau \times
W\rightarrow A$ such that $V_{B}=t^{\alg{A}}\circ V$ and hence $\model{N}=\model{M}'$. Then, $t[\Gamma]$ is globally true in $\model{M}$ and, hence, so is $t(\f)$. Therefore, $\f$ is globally true in $\model{N}$ as desired.
\end{proof}

Now, it is clear that Theorem~\ref{lem1} follows from Propositions~\ref{p:FiniteBoolean} and~\ref{key}. Furthermore, since the preservation of consequence in these propositions holds at the level of a fixed frame, it is clear that it also holds for the consequence given by a class of frames:

\begin{Cor}\label{c:Translation-Classical-to-ManyValued}
Let $\mathbb{F}$ be a class of frames and assume that a finite lattice algebra $\alg{A}$ interprets a Boolean algebra $\alg{B}$ via a unary term $t(x)$. Then, for each set of formulas $\Gamma \cup \{\f\} \subseteq \fm_\alg{2}^{\Diamond \Box}(\tau)$, we have:
\begin{equation*}
\Gamma \vDash_{\mathrm{Log}(\mathbb{F},\alg{2},\tau)}\f \,\,\, \text{iff} \,\,\, t[\Gamma] \vDash_{\mathrm{Log}(\mathbb{F},\alg{A},\tau)}t(\f).
\end{equation*}
\end{Cor}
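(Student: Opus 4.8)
The plan is to reduce the class-level statement to the frame-by-frame statements already established in Propositions~\ref{p:FiniteBoolean} and~\ref{key}. The crucial preliminary observation is that, for any finite lattice algebra $\alg{C}$, the consequence relation over a class of frames decomposes as an intersection of the single-frame consequences:
\begin{equation*}
\Gamma \vDash_{\mathrm{Log}(\mathbb{F},\alg{C},\tau)}\f \quad\text{iff}\quad \Gamma \vDash_{\mathrm{Log}(\{\mathfrak{F}\},\alg{C},\tau)}\f \text{ for every } \mathfrak{F}\in\mathbb{F}.
\end{equation*}
This is immediate from the definition of $\vDash_{\mathrm{Log}(\mathbb{F},\alg{C},\tau)}$: a $\alg{C}$-valued model based on some frame in $\mathbb{F}$ is based on exactly one such frame, so the quantification ``for each $\mathfrak{F}\in\mathbb{F}$ and each model based on $\mathfrak{F}$'' may be split first over the choice of $\mathfrak{F}$ and then over the models based on it. I would record this decomposition explicitly for the algebras $\alg{2}$, $\alg{B}$, and $\alg{A}$.

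With this in hand, I would chain the equivalences. Starting from $\Gamma \vDash_{\mathrm{Log}(\mathbb{F},\alg{2},\tau)}\f$, the decomposition gives $\Gamma \vDash_{\mathrm{Log}(\{\mathfrak{F}\},\alg{2},\tau)}\f$ for every $\mathfrak{F}\in\mathbb{F}$. Since $\alg{A}$ is finite and $\alg{B}$ is a quotient of a reduct of $\alg{A}$, the algebra $\alg{B}$ is a finite Boolean algebra, so Proposition~\ref{p:FiniteBoolean} applies frame-wise and yields the equivalence with $\Gamma \vDash_{\mathrm{Log}(\{\mathfrak{F}\},\alg{B},\tau)}\f$ for every $\mathfrak{F}$. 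Next, because $\alg{A}$ interprets $\alg{B}$ via the term $t(x)$, Proposition~\ref{key} applies frame-wise and converts this into $t[\Gamma] \vDash_{\mathrm{Log}(\{\mathfrak{F}\},\alg{A},\tau)}t(\f)$ for every $\mathfrak{F}\in\mathbb{F}$. A final application of the decomposition, now for $\alg{A}$, repackages the universal statement over $\mathbb{F}$ as $t[\Gamma] \vDash_{\mathrm{Log}(\mathbb{F},\alg{A},\tau)}t(\f)$, which is the desired right-hand side.

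I do not expect any genuine obstacle here: the content is entirely carried by the two preceding propositions, and the only new ingredient is the decomposition of class-consequence into single-frame consequences, which is a direct unfolding of the definition. The one point deserving a line of justification is the finiteness of $\alg{B}$, needed to legitimately invoke Proposition~\ref{p:FiniteBoolean}; this follows at once from the finiteness of $\alg{A}$ together with $\alg{B}\cong\text{Red}(\alg{A})/Eq(t^{\alg{A}})$. Everything else is a routine concatenation of ``if and only if'' steps, so the proof will be short.
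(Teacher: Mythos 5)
Your proof is correct and follows essentially the same route as the paper: the paper obtains Corollary~\ref{c:Translation-Classical-to-ManyValued} precisely by observing that Propositions~\ref{p:FiniteBoolean} and~\ref{key} hold at the level of each fixed frame and therefore lift to consequence over a class of frames. You merely make explicit the frame-by-frame decomposition (and the finiteness of $\alg{B}$) that the paper leaves as ``clear,'' which is a harmless elaboration, not a different argument.
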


We end this subsection with two technical remarks regarding the notion of interpretability of algebras that has been instrumental in our translation.

\begin{Rmk} \em{The property of interpreting an algebra $\alg{B}$ in $\alg{A}$ via a term $t(x)$ is quasiequational in $\alg{A}$. Hence, if $\alg{A}$ interprets a Boolean
algebra via $t(x)$, then any member of \/ $\mathbf{Q}(\alg{A})$ interprets a Boolean
algebra via $t(x)$ (not necessarily the same algebra). If $t(x)$ is idempotent (which is
the case in the examples we have), this property has a simple equational
characterization because the identities in $\alg{A}$: \medskip

$t(x\vee y)\approx t(t(x)\vee t(y))$

$t(x\wedge y) \approx t(t(x)\wedge t(y))$

$t(\lnot x) \approx t(\lnot t(x))$

$t(\1) \approx \1$

\noindent imply the congruence character of $Eq(t).$ They are actually
equivalent to idempotency plus congruence. Therefore, if $\alg{A}$ interprets $\alg{B}$
via an idempotent term $t,$ then any member of \/ $\mathbf{V}(\alg{A})$ interprets a
member of \/ $\mathbf{V}(\alg{B})$ via $t.$}  \end{Rmk}

\begin{Rmk} \em{The conditions for $nx$ in Example~\ref{ex6} are
equational, except the inclusion ${\bf B}(\alg{A})\subseteq n\alg{A}$ which is given by a
quasiequation. Hence, any algebra in the variety $\mathbf{V}(\alg{A})$ generated
by $\alg{A}$ interprets via $x\longmapsto nx$ a\ subalgebra of its Boolean
skeleton, and any algebra in the quasivariety $\mathbf{Q}(\alg{A})$ interprets the
full Boolean skeleton.

According to \citep{C-T 2012}, all algebras of a variety $\mathbb{V}$ of\/
\MV-algebras interpret via a term their full skeleton if and only if $\mathbb{V}$
satisfies the equation $2x^{2} \approx (2x)^{2},$ in which case $t(x)=2x^{2}$ does
the job. This is the case of the variety generated by the \emph{Chang algebra}. However, this example is orthogonal to ours because the only non-trivial finite \MV-algebra satisfying this equation is $\alg{2}.$
}
\end{Rmk}

%
%
%
%
%
%
%

\subsection{Goldblatt--Thomason Theorem and related results} 
With the main theorems in hand, we are already in a position to state  a  Goldblatt--Thomason theorem for a large class of many-valued modal logics as a consequence of the classical theorem itself~\citep[Thm.~8]{gold} and Corollary~\ref{c:SameDefinableFrames}.
 
\begin{Cor}[Finitely-valued Goldblatt--Thomason Theorem]\label{gt} Let \alg{A} be a finite lattice algebra which interprets
via some term  a Boolean algebra. Furthermore, let $\mathbb{F}$ be an elementary class of frames. Then, $\mathbb{F}$ is modally \alg{A}-definable by a set of formulas  in $\fm_\alg{A}^{\Diamond \Box}(\tau)$ iff\/ $\mathbb{F}$ is closed under taking generated subframes, disjoint unions, and bounded morphic images, and reflects ultrafilter extensions. \end{Cor}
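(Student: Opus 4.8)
The plan is to derive this Goldblatt--Thomason theorem directly from the classical one, using the fact that our main results reduce $\alg{A}$-definability to $\alg{2}$-definability. First I would invoke Corollary~\ref{c:SameDefinableFrames}: under the standing hypothesis that $\alg{A}$ interprets a Boolean algebra via some term, the class of modally $\alg{A}$-definable frames coincides exactly with the class of modally $\alg{2}$-definable frames. Thus, for any class $\mathbb{F}$ of frames, $\mathbb{F}$ is modally $\alg{A}$-definable (by a set of formulas in $\fm_\alg{A}^{\Diamond \Box}(\tau)$) if and only if $\mathbb{F}$ is modally $\alg{2}$-definable in the classical modal language.

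Once this equivalence is in place, the statement becomes a pure translation of the classical Goldblatt--Thomason theorem~\citep[Thm.~8]{gold}. I would recall that the classical theorem states precisely that, for an elementary class $\mathbb{F}$ of frames, $\mathbb{F}$ is modally $\alg{2}$-definable if and only if $\mathbb{F}$ is closed under generated subframes, disjoint unions, and bounded morphic images, and reflects ultrafilter extensions. Chaining this with the coincidence of definability notions from Corollary~\ref{c:SameDefinableFrames} immediately yields the desired biconditional for $\alg{A}$-definability. The argument is therefore essentially a two-line deduction: $\mathbb{F}$ is modally $\alg{A}$-definable iff it is modally $\alg{2}$-definable (Corollary~\ref{c:SameDefinableFrames}) iff it satisfies the four closure/reflection conditions (classical Goldblatt--Thomason).

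The main point requiring care—though not a deep obstacle—is to verify that the hypotheses match exactly on both sides. On the classical side, the Goldblatt--Thomason theorem applies to definability by a \emph{set} of modal formulas over an elementary class, which is precisely the framing of our statement; I would make sure that Corollary~\ref{c:SameDefinableFrames}, which is phrased for single-formula definability via Theorem~\ref{t:AdefImplies2def} and Theorem~\ref{lem1}, extends to definability by arbitrary sets of formulas. This extension is straightforward: a class is definable by a set $\Phi$ of $\alg{A}$-valued formulas iff it is the intersection of the classes each single formula defines, and by the single-formula coincidence each such class is $\alg{2}$-definable, so the whole intersection is $\alg{2}$-definable by the corresponding set $\{\f^* \mid \f \in \Phi\}$ (and conversely using the term translation $t(\cdot)$ of Theorem~\ref{lem1}). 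Since no genuinely new idea is needed beyond the already-established coincidence of definable frame classes, I expect the only substantive work to be this bookkeeping passage from single formulas to sets, after which the classical theorem does all the remaining work.
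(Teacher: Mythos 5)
Your proof is correct and is essentially the paper's own argument: the paper obtains Corollary~\ref{gt} exactly by combining Corollary~\ref{c:SameDefinableFrames} with the classical Goldblatt--Thomason theorem of~\citep{gold}. Your additional bookkeeping step---extending the single-formula coincidence of Theorem~\ref{t:AdefImplies2def} and Theorem~\ref{lem1} to definability by arbitrary sets of formulas via intersections of the singly-defined classes---is handled correctly and fills in a detail the paper leaves implicit.
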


%

It is known that the characterizing conditions of Corollary~\ref{gt} could be weakened to closure under \emph{ultrapowers} or under \emph{ultrafilter extensions} (see \citep{gold2}). Moreover, from the two theorems obtained by Van Benthem in~\citep[Section~4.2]{van2}, we obtain the following additional characterizations of finite (transitive) definable frames:
 
\begin{Cor} Let \alg{A} be a finite lattice algebra which interprets
via some term a Boolean algebra. Then, a class $\mathbb{F}$ of finite frames is modally \alg{A}-definable  in $\fm_\alg{A}^{\Diamond \Box}(\tau)$ iff it is closed under taking generated subframes, finite disjoint unions, and local p-morphic images.
\end{Cor}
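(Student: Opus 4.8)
The plan is to reduce \alg{A}-valued definability of a class of finite frames to classical ($\alg{2}$-valued) definability, and then to invoke Van Benthem's finite-frame characterization of classical modal definability from~\citep[Section~4.2]{van2}. The hypothesis that \alg{A} interprets a Boolean algebra via some term is exactly what is needed to run the translation of Theorem~\ref{lem1}.

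First I would emphasize that the coincidence of \alg{A}-definable and \alg{2}-definable frame classes is witnessed \emph{frame by frame}. On one side, Theorem~\ref{t:AdefImplies2def} assigns to each $\f \in \fm_\alg{A}^{\Diamond \Box}(\tau)$ a formula $\f^* \in \fm_\alg{2}^{\Diamond \Box}(\tau^*)$ for which $\mathfrak{F} \models_\alg{A} \f$ iff $\mathfrak{F} \models_\alg{2} \f^*$ holds at each individual frame $\mathfrak{F}$; on the other side, Theorem~\ref{lem1} (equivalently, Corollary~\ref{c:Translation-Classical-to-ManyValued} with $\Gamma = \emptyset$) assigns to each $\f \in \fm_\alg{2}^{\Diamond \Box}(\tau)$ the formula $t(\f) \in \fm_\alg{A}^{\Diamond \Box}(\tau)$ with $\mathfrak{F} \models_\alg{2} \f$ iff $\mathfrak{F} \models_\alg{A} t(\f)$, again at each single $\mathfrak{F}$. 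Since neither equivalence refers to the ambient collection of frames, I would argue that both relativize verbatim to the universe of finite frames: a set $\Psi \subseteq \fm_\alg{A}^{\Diamond \Box}(\tau)$ and its translate $\{\f^* \mid \f \in \Psi\}$ are validated by exactly the same finite frames, and symmetrically for $\Phi \subseteq \fm_\alg{2}^{\Diamond \Box}(\tau)$ and $\{t(\f) \mid \f \in \Phi\}$. Hence a class $\mathbb{F}$ of finite frames is modally \alg{A}-definable (relative to finite frames) if and only if it is modally \alg{2}-definable (relative to finite frames).

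It then remains to cite the relevant theorem of~\citep[Section~4.2]{van2}, according to which a class of finite frames is classically modally definable precisely when it is closed under generated subframes, finite disjoint unions, and local p-morphic images; combining this with the displayed equivalence gives the corollary. I expect the main obstacle to be this relativization step, namely verifying that both directions of the definability coincidence are genuinely pointwise in the frame --- in particular that the passage to the $k$-step restrictions $\model{M}|k$ in the proof of Theorem~\ref{t:AdefImplies2def} makes no essential use of infinitude --- so that the coincidence recorded in Corollary~\ref{c:SameDefinableFrames} for arbitrary frames persists when the quantifier range is confined to finite frames. Everything beyond that point is a direct application of the classical result.
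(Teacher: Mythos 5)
Your proposal coincides with the paper's own (implicit) derivation of this corollary: the paper obtains it by combining the coincidence of modally \alg{A}-definable and \alg{2}-definable frame classes (Corollary~\ref{c:SameDefinableFrames}, resting on Theorems~\ref{t:AdefImplies2def} and~\ref{lem1}, both of which are indeed frame-by-frame equivalences) with Van Benthem's characterization of classically definable classes of finite frames in~\citep[Section~4.2]{van2}. Your additional care about relativizing the definability coincidence to the universe of finite frames is the only subtlety, and your handling of it is correct.
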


\begin{Cor}
Let \alg{A} be a finite lattice algebra which interprets via some term a Boolean algebra. Then, a class $\mathbb{F}$ of finite transitive frames is modally \alg{A}-definable in $\fm_\alg{A}^{\Diamond \Box}(\tau)$ iff it is closed under taking generated subframes, finite disjoint unions, and p-morphic images.
\end{Cor}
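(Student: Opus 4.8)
The plan is to obtain this characterization by transporting the corresponding classical (two-valued) result through the validity-preserving translations already developed, exactly as in the Goldblatt--Thomason corollary above. The classical ingredient is the relevant theorem of Van Benthem \citep[Section~4.2]{van2}: a class of finite transitive frames is modally $\alg{2}$-definable (relative to the finite transitive frames) precisely when it is closed under generated subframes, finite disjoint unions, and p-morphic images.

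The point requiring attention at the outset is that, since every frame in $\mathbb{F}$ is finite, the operative notion here is definability \emph{relative to} the universe of finite transitive frames, not absolute definability (a class consisting only of finite frames is essentially never absolutely definable). I would therefore first record the relativized form of Corollary~\ref{c:SameDefinableFrames}: for any fixed class $\mathbb{U}$ of frames, a subclass $\mathbb{F} \subseteq \mathbb{U}$ is modally $\alg{A}$-definable relative to $\mathbb{U}$ iff it is modally $\alg{2}$-definable relative to $\mathbb{U}$. This relativization is immediate from the two translations, because each preserves validity \emph{frame by frame}: the proof of Theorem~\ref{t:AdefImplies2def} actually yields $\mathfrak{F} \models_\alg{A} \f$ iff $\mathfrak{F} \models_\alg{2} \f^*$ for every single frame $\mathfrak{F}$, while Theorem~\ref{lem1} (through Proposition~\ref{key}) yields $\mathfrak{F} \models_\alg{2} \p$ iff $\mathfrak{F} \models_\alg{A} t(\p)$ for every single frame $\mathfrak{F}$. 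Since both biconditionals hold pointwise on frames, they persist upon intersecting with $\mathbb{U}$. Concretely, if $\Phi \subseteq \fm_\alg{A}^{\Diamond \Box}(\tau)$ $\alg{A}$-defines $\mathbb{F}$ relative to $\mathbb{U}$, then $\{\f^* \mid \f \in \Phi\}$ $\alg{2}$-defines $\mathbb{F}$ relative to $\mathbb{U}$, and symmetrically $t[\Psi] = \{t(\p) \mid \p \in \Psi\}$ turns an $\alg{2}$-defining set $\Psi$ into an $\alg{A}$-defining one over the same universe.

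Chaining the two equivalences then gives the corollary: $\mathbb{F}$ is modally $\alg{A}$-definable iff it is modally $\alg{2}$-definable iff it satisfies the three closure conditions. The main thing to check carefully is precisely the relative-definability bookkeeping just sketched---that translating an entire \emph{set} of defining formulas again defines the same class over $\mathbb{U}$, and that restricting the ambient frames to be finite and transitive creates no friction with the modal-rank bound built into $\f^*$. Both are routine once the frame-by-frame validity equivalences are in place, so the genuine content of the argument is simply the transfer of Van Benthem's classical theorem across the translations, and I do not anticipate any serious obstacle beyond this transport.
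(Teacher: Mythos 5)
Your proposal is correct and takes essentially the same approach as the paper, which obtains this corollary simply by combining Van Benthem's classical characterization of definable classes of finite transitive frames with Corollary~\ref{c:SameDefinableFrames}, offering no further argument. The one point where you go beyond the paper---making explicit that the coincidence of $\alg{A}$-definability and $\alg{2}$-definability relativizes to any fixed universe of frames, because Theorem~\ref{t:AdefImplies2def} and Theorem~\ref{lem1} really establish frame-by-frame validity equivalences ($\mathfrak{F} \models_\alg{A} \f$ iff $\mathfrak{F} \models_\alg{2} \f^*$, and $\mathfrak{F} \models_\alg{2} \p$ iff $\mathfrak{F} \models_\alg{A} t(\p)$)---is precisely the bookkeeping the paper leaves implicit, and it is genuinely needed here, since a nonempty class containing only finite frames can never be absolutely modally definable (closure under infinite disjoint unions would fail), so the statement must be read as definability relative to the finite transitive frames.
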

  
Observe now that the translation in \S \ref{trans} may be extended to the first-order modal setting rather easily (after all, the semantics of $\Diamond$ and $\Box$ is similar to that of the quantifiers $\exists$ and $\forall$):
\begin{center}

\begin{tabular}{rcll}
$T^a(P^n_i)$ & $=$ & 
$P_i^{na}$  \,\,\,\, ($i \geq 1$)&
\\[0.5ex]
\\\vspace{0.4ex}
$T^a(\exists x\, \p)$ & $=$ & $(\bigvee\limits_{\substack {k \leq |A| \\b_1  \ldots b_k \in A \\ b_1 \vee^\alg{A} \ldots \vee^\alg{A} b_k = a}} \bigwedge\limits_{i=1}\limits^{k} \exists x\,  T^{b_i}(\p) ) \wedge \forall y\, ( \bigvee\limits_{\substack {b \in A \\{b \leq a}}}     T^{b}(\p(y/x)))$
\\\vspace{0.4ex}
$T^a(\forall x\, \p)$ & $=$ & $(\bigvee\limits_{\substack {k \leq |A| \\b_1,  \ldots, b_k \in A \\ b_1 \wedge^\alg{A} \ldots \wedge^\alg{A} b_k = a}} \bigwedge\limits_{i=1}\limits^{k} \exists x\,  T^{b_i}(\p) ) \wedge \forall y\, ( \bigvee\limits_{\substack {b \in A \\{a \leq b}}}     T^{b}(\p(y/x))). $

\end{tabular}
\end{center}
Consequently, using \citep[Thm.~3.6]{zho} (a Goldblatt--Thomason Theorem for first-order modal logic) and our method we could obtain:
  
\begin{Cor}
Let \alg{A} be a finite lattice algebra which interprets via some term  a Boolean algebra. Let\/ $\mathbb{F}$ be  a  class of frames closed under elementary equivalence. Then, $\mathbb{F}$ is modally \alg{A}-definable by a set of formulas  in $\fm_\alg{A}^{\Diamond \Box \exists\forall }(\tau)$ iff it is closed under bounded morphic images, taking generated subframes, and disjoint unions.
\end{Cor}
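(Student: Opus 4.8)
The plan is to mirror, in the first-order modal setting, the two-step reduction that produced the propositional finitely-valued Goldblatt--Thomason theorem (Corollary~\ref{gt}), replacing the propositional classical theorem by its first-order counterpart \citep[Thm.~3.6]{zho}. Concretely, I would first fix the semantics of $\fm_\alg{A}^{\Diamond \Box \exists\forall}(\tau)$ over crisp frames: a model over $\tuple{W,R}$ assigns to each world a first-order structure whose $n$-ary predicates take values in $A$, with $\exists$ and $\forall$ read as the supremum and infimum over the domain, exactly as $\Diamond$ and $\Box$ are the supremum and infimum over the $R$-successors. The extended translation $T^a$ displayed just above then turns each $\fm_\alg{A}^{\Diamond \Box \exists\forall}$-formula into a classical first-order modal formula over $\tau^*$, with predicates recording the event ``$P_i$ takes value $a$'' on a given tuple.

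The first technical step is the first-order Switch Lemma, $\semvalue{\f}^\model{M}_w = a$ iff $\tuple{\model{M}^*,w}\models T^a(\f)$, proved by induction on complexity; the new quantifier cases are handled just like the modal cases, since $\exists$ and $\forall$ are suprema and infima over the domain and the finiteness of $\alg{A}$ makes each such supremum or infimum a finite join or meet of the finitely many attained values. That same finiteness observation carries the key inductive identity $\semvalue{\p}^{\model{M}'}_w = t^\alg{A}(\semvalue{\p}^\model{M}_w)$ of Proposition~\ref{key} through the quantifier cases, because the epimorphism $t^\alg{A}$ preserves finite joins and meets; likewise the componentwise decomposition of Proposition~\ref{p:FiniteBoolean} is untouched by the passage to $\exists$ and $\forall$, as these are computed componentwise in a product $\alg{2}^n$ over a fixed domain. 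These two propositions yield, verbatim, the first-order analogue of Theorem~\ref{lem1}, hence the inclusion: every class $\alg{2}$-definable in $\fm_\alg{2}^{\Diamond \Box \exists\forall}(\tau)$ is $\alg{A}$-definable in $\fm_\alg{A}^{\Diamond \Box \exists\forall}(\tau)$, the defining formulas being the $t$-images of the classical ones.

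For the reverse inclusion I would reprove Theorem~\ref{t:AdefImplies2def} in the first-order setting. The locality property persists---the truth of a classical first-order modal formula at a pointed model depends only on the restriction to worlds reachable in at most $\mathrm{rank}(\f)$ steps, since quantifiers range inside a single world's domain and do not traverse $R$---and the translation does not raise modal rank. One builds $\f^*$ as before from $T^{1_\alg{A}}(\f)$ together with the first-order version of the axioms $T^*(\tau)$, namely $\forall \bar x\,\bigvee_{a\in A} P_i^a(\bar x)$ and $\forall \bar x\,\neg(P_i^a(\bar x)\wedge P_i^b(\bar x))$ for $a\neq b$, which force the $\alg{2}$-models satisfying them to be of the form $\model{M}^*$; Lemma~\ref{goodlem} is adapted accordingly. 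This gives that every $\alg{A}$-definable class is $\alg{2}$-definable, and combining the two inclusions yields the first-order analogue of Corollary~\ref{c:SameDefinableFrames}. The passage from single formulas to sets is routine, since a set $\Phi$ defines the intersection of the classes defined by its members and both translations act formula-wise.

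Finally I would invoke \citep[Thm.~3.6]{zho}: for a class $\mathbb{F}$ closed under elementary equivalence, being $\alg{2}$-definable in first-order modal logic is equivalent to closure under bounded morphic images, generated subframes, and disjoint unions. Chaining this with the definability coincidence just established gives the stated biconditional. The main obstacle I anticipate is the first-order reproof of Theorem~\ref{t:AdefImplies2def}: the isomorphism-of-pointed-models argument that produces a witnessing model $\model{N}^*$ must now respect the first-order structure---domains and $A$-valued predicate interpretations---on the $n$-step restriction, so one has to verify that an arbitrary $\alg{2}$-model satisfying the first-order $T^*(\tau)$ can be realized, up to isomorphism on the relevant restriction, as some $\model{M}^*$ while the structure on the remaining worlds is filled in consistently (for instance by letting each predicate take the value $\1_\alg{A}$ everywhere outside the restriction). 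This finite-domain bookkeeping is the most delicate point, but it is local and structural, so I expect no genuine new difficulty beyond the added domain data.
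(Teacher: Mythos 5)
Your proposal takes essentially the same route as the paper: the paper itself only sketches this corollary, displaying the extended translation $T^a$ for $\exists$ and $\forall$ and remarking that the result follows from \citep[Thm.~3.6]{zho} combined with ``our method,'' i.e.\ first-order analogues of Theorem~\ref{t:AdefImplies2def} and of Propositions~\ref{p:FiniteBoolean} and~\ref{key} yielding the coincidence of \alg{A}- and \alg{2}-definable frame classes as in Corollary~\ref{c:SameDefinableFrames}. The details you supply (the first-order Switch Lemma, the quantifier cases in the two propositions, the locality argument for the first-order version of Theorem~\ref{t:AdefImplies2def}, and the formula-wise passage to sets of formulas) are precisely the intended argument.
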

  
\begin{Rmk} \em{Observe that, in all the results, the hypothesis that $\alg{A}$ interprets via some term a Boolean algebra could be substituted by requiring $\alg{A}$ interprets via some term a Boolean algebra {\em or a Heyting algebra, or an MV-algebra, or a pseudocomplemented lattice}. This is clear by the results of the previous section.}
\end{Rmk}

\section{Applications to computational complexity}\label{comp}

Following~\citep[Section 3.3]{de} we may rewrite our translation from \S\ref{trans}, using additional propositional letters, in such a way that the length of the translated formula becomes polynomial on the length of the original one. However, this trick was defined only for the case of algebraic connectives, so we are going to extend it here to modal operators. This technique will allow us to easily check that the complexity of the consequence and validity problems of many-valued modal logics coincides with that of their two-valued counterparts.

Let now $\tau^* = \{q_{\f}^a \mid a \in A, \f \in \fm_\alg{A}^{\Diamond \Box}(\tau)\}$ and redefine $T^*(\tau)$ as \[
 \bigvee_{\substack{a \in A }}q_{p_i}^a, \,\,  \neg (q_{p_i}^a\land q_{p_i}^b) \,\,\,\,\,\, \,\,\,\,\,\,  (a, b \in A, a \neq b, p_i \in \tau). 
 \]

Then, we consider the theory $T^*(\tau) \cup \{E(\f) \mid  \f \in \fm_\alg{A}^{\Diamond \Box}(\tau)\}$ where (we use $\Rightarrow$ to represent the usual material implication and $\Leftrightarrow$ for material equivalence):

\begin{center}
\begin{tabular}{rcll}
$E({\circ}(\vectn{\p}) := $ & $q^a_{{\circ}(\vectn{\p})}$ & $\Leftrightarrow$ & $\bigvee\limits_{\substack {b_1,  \ldots, b_n \in A \\{\circ}^\alg{A}(b_1, \ldots, b_n) = a}}(q^{b_1}_{\p_1} \wedge \ldots\wedge q^{b_n}_{\p_n})$ 
\\\vspace{0.4ex}
$E(\Diamond \p) := $ & $q^a_{\Diamond \p}$ & $\Leftrightarrow$ & $(\bigvee\limits_{\substack {k \leq |A| \\b_1,  \ldots, b_k \in A \\ b_1 \vee^\alg{A} \ldots \vee^\alg{A} b_k = a}} \bigwedge\limits_{i=1}\limits^{k} \Diamond  q^{b_i}_{\p} ) \wedge \Box ( \bigvee\limits_{\substack {b \in A \\{b \leq a}}}     q^{b}_{\p})$
\\\vspace{0.4ex}
$E(\Box \p) := $ & $q^a_{\Box \p}$ & $\Leftrightarrow$ & $(\bigvee\limits_{\substack {k \leq |A| \\b_1,  \ldots, b_k \in A \\ b_1 \wedge^\alg{A} \ldots \wedge^\alg{A} b_k = a}} \bigwedge\limits_{i=1}\limits^{k} \Diamond  q^{b_i}_{\p} ) \wedge \Box ( \bigvee\limits_{\substack {b \in A \\{a \leq b}}}     q^{b}_{\p}). $

\end{tabular}
\end{center}

Observe that the length of these formulas is always bounded by $c2^{|A|}|A|$ where $c$ is a constant number.

Using this theory, we can obtain a translation from many-valued to classical consequence (that is, the reverse direction of Corollary~\ref{c:Translation-Classical-to-ManyValued} in a much less straightforward manner):

\begin{Thm}\label{t:Translation-ManyValued-to-Classical}
Let $\alg{A}$ be a finite lattice algebra, $\tau$ a denumerable set of variables, and\/ $\mathbb{F}$ a class of frames. Then, for each\/ $\Gamma \cup \{\f\} \subseteq \fm_\alg{A}^{\Diamond \Box}(\tau)$, we have:
\begin{center}
$\Gamma \vDash_{\mathrm{Log}(\mathbb{F},\alg{A},\tau)} \f$ \,\, iff \,\, $\{q^{1_\alg{A}}_\theta \mid \theta \in \Gamma\}  \cup T^*(\tau) \cup \{E(\p) \mid  \p \in \fm_\alg{A}^{\Diamond \Box}(\tau)\} \vDash_{\mathrm{Log}(\mathbb{F},\alg{2},\tau^*)} q^{1_\alg{A}}_\f.$
\end{center}
Furthermore, if\/ $\Gamma \cup \{\f\}$ is finite, the theory $T^*(\tau) \cup \{E(\p) \mid  \p \in \fm_\alg{A}^{\Diamond \Box}(\tau)\}$ can be taken to be finite as well, involving only the relevant axioms for $\p$ being a subformula of some formula of\/ $\Gamma \cup \{\f\}$. 
Similarly, as in Theorem~\ref{t:AdefImplies2def}, we have that
\begin{center}
$\vDash_{\mathrm{Log}(\mathbb{F},\alg{A},\tau)} \f$ \,\, iff \,\, $ \vDash_{\mathrm{Log}(\mathbb{F},\alg{2},\tau^*)}(\bigwedge_{m \leq \mathrm{rank}(\f)}  \Box^m ( \bigwedge (T^*(\tau) \cup \{E(\p) \mid  \p \ \text{subformula of} \ \f\}) ) \Rightarrow q^{1_\alg{A}}_\f.$
\end{center}
\end{Thm}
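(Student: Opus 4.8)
The plan is to prove the equivalence by relating the auxiliary propositional letters $q^a_\psi$ to the truth-values of the subformulas $\psi$ they name, exactly as the Switch Lemma relates $p_i^a$ to the value of $p_i$. The key observation is that the theory $T^*(\tau) \cup \{E(\psi)\}$ functions as a definitional scaffolding: $T^*(\tau)$ forces, in each world, the family $\{q^a_{p_i}\}_{a \in A}$ to behave like a characteristic function picking out a single value of $A$ for $p_i$, while each $E(\psi)$ recursively fixes the truth-value of $q^a_\psi$ in terms of the $q^b$'s for the immediate subformulas of $\psi$, in lockstep with the semantic clauses for the connectives and modalities. Thus any $\alg{2}$-valued model $\model{N}$ of this whole theory (based on a frame in $\mathbb{F}$) canonically determines an $\alg{A}$-valued model $\model{M}$ via $V^{\model{M}}(p_i,w) = a$ iff $\tuple{\model{N},w} \models q^a_{p_i}$, in the style of Lemma~\ref{goodlem}(2).

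First I would establish the central lemma by induction on the complexity of $\psi$: for every $\alg{2}$-valued model $\model{N}$ globally satisfying $T^*(\tau) \cup \{E(\chi) \mid \chi \in \fm_\alg{A}^{\Diamond \Box}(\tau)\}$, and for the associated $\alg{A}$-valued $\model{M}$, one has $\semvalue{\psi}^\model{M}_w = a$ iff $\tuple{\model{N},w} \models q^a_\psi$. The base case is immediate from $T^*(\tau)$ and the definition of $V^\model{M}$; the inductive step for each connective and modality follows because the biconditional $E(\psi)$ has precisely the right-hand side that the original translation $T^a$ used, so the reasoning is identical to the proof of the Switch Lemma, now reading the $q$'s as abbreviations for the already-established translations of subformulas. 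Conversely, starting from an arbitrary $\alg{A}$-valued $\model{M}$, one defines $\model{N}$ by setting $\tuple{\model{N},w}\models q^a_\psi$ iff $\semvalue{\psi}^\model{M}_w = a$, and checks that $\model{N}$ satisfies $T^*(\tau) \cup \{E(\psi)\}$ and the matching $q$-values.

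With this lemma in hand the first biconditional is routine. For the left-to-right direction, assume $\Gamma \vDash_{\mathrm{Log}(\mathbb{F},\alg{A},\tau)} \f$ and let $\model{N}$ be a $\alg{2}$-model over $\mathbb{F}$ satisfying the scaffolding and all $q^{1_\alg{A}}_\theta$ for $\theta \in \Gamma$; the lemma gives an $\alg{A}$-model $\model{M}$ in which every $\theta$ is globally true (as $\semvalue{\theta}^\model{M}_w = 1_\alg{A}$), so $\f$ is globally true, whence $q^{1_\alg{A}}_\f$ holds globally in $\model{N}$. The right-to-left direction dualizes by converting any $\alg{A}$-model witnessing failure into its companion $\model{N}$. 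The finiteness claim is handled by noting that $E(\psi)$ for $\psi$ outside the set of subformulas of $\Gamma \cup \{\f\}$ never constrains the relevant variables, so those axioms can be dropped; and the modal-rank reformulation for $\Gamma = \emptyset$ mirrors the $\f^*$ construction in Theorem~\ref{t:AdefImplies2def}, using that global truth over $\mathbb{F}$ reduces to truth within $\mathrm{rank}(\f)$ steps via the restriction property $\model{M}|n$ invoked there.

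The main obstacle I anticipate is the inductive step for the modalities in the central lemma, since there the scaffolding must correctly capture both the supremum/infimum computation and the interplay between the $\Diamond$-disjunct (recording that $a$ is a join of witnessed values) and the $\Box$-conjunct (bounding all accessible values), paralleling the original $T^a(\Diamond\p)$ and $T^a(\Box\p)$ clauses; one must verify that the finite bound $k \leq |A|$ on the number of joinands genuinely suffices to realize every attainable supremum, which rests on $\alg{A}$ being finite. A secondary subtlety is ensuring the back-translation $\model{N} \rightsquigarrow \model{M}$ is well-defined, i.e.\ that $T^*(\tau)$ indeed forces exactly one value per variable per world so that $V^\model{M}$ is a genuine function; this is precisely Lemma~\ref{goodlem}(2), which I would cite rather than reprove.
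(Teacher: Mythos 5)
Your proof is correct and follows exactly the route the paper intends: the paper states Theorem~\ref{t:Translation-ManyValued-to-Classical} without an explicit proof, and your central lemma---a Switch-Lemma-style correspondence $\semvalue{\p}^\model{M}_w = a$ iff $\tuple{\model{N},w}\models q^a_\p$ for $\alg{2}$-valued models of the scaffolding theory, proved by the same induction as the Switch Lemma and paired with the companion-model construction in the style of Lemma~\ref{goodlem}(2)---is precisely the machinery on which the stated result rests, including your observation that finiteness of $\alg{A}$ is what makes the bound $k\leq|A|$ on joinands/meetands adequate in the modal clauses. The only place needing slightly more care than your sketch indicates is the rank-bounded validity claim: unlike in Theorem~\ref{t:AdefImplies2def}, the conclusion $q^{1_\alg{A}}_\f$ there has modal rank $0$ and the depth information is carried entirely by the $\Box^m$-prefixed axioms, so the induction must track the distance of a world from $w$ against the rank of each subformula (defining the associated $\alg{A}$-valued model arbitrarily outside the depth-$\mathrm{rank}(\f)$ ball); this bookkeeping is routine and does not affect the correctness of your argument.
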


Note that the length of the formula in the last statement is polynomial w.r.t.\ the length of $\f$ (thanks, among others, to the bounded size observed above for the formulas $E(\p)$). Therefore, from Theorem~\ref{t:Translation-ManyValued-to-Classical} and the results from~\citep{chen1994} for consequence in two-valued modal logic, the problem of consequence from a finite set of premises in many-valued modal logics over all crisp frames is decidable. Moreover, is in EXPTIME for the logics K, T, and B, in PSPACE for the logic S4, and in co-NP for the logics KD45 and S5. Similarly, using the classical results from \citep{la}, we obtain that the problem of validity in many-valued modal logics over all crisp frames is in PSPACE for the logics K, T, B, and S4, and in co-NP for KD45 and S5. Finally, thanks to the reverse translation for the many-valued modal logics in Corollary~\ref{c:Translation-Classical-to-ManyValued}, we can conclude that all these computational problems are also complete in their corresponding complexity class. In particular, we have covered the complexity  results in \citep{bou2} for finitely-valued \L ukasiewicz modal logics through a completely different proof.


\section{Conclusion}\label{con}
 
In this paper we have only scratched the surface of the potential of the translation introduced in \S \ref{trans}. 
We believe that the application in obtaining Corollary~\ref{gt} is  quite a nice illustration of the power of this translation. We did not only provide an alternative to the rather complex proof from~\citep{te}, but we also generalized the result to any finite residuated lattice that interprets a Boolean algebra by a term  (the case of finite \MV-algebras being just one example). Moreover, we also managed to prove some new Goldblatt--Thomason style results that were not considered in \citep{te}. Observe that, due to the duality of the modalities $\Diamond$ and $\Box$ classically (though not in our setting), the translation can be defined in the context of unimodal systems as well where we only have one of $\Diamond$ or $\Box$.   

A similar translation to that in  \S \ref{trans} can be offered for many-valued modal logics on frames with a many-valued relation by considering a suitable polymodal classical counterpart (we leave the details to the reader; the idea is to introduce classical modalities that will simulate the value of the accessibility relation). However, due to the added complexity introduced by the many-valued accessibility relation, we are not able to obtain an analogue of our main result with the help of such translation.


The more important open problem around Theorem \ref{lem1}, though, is whether all the conditions we have found are actually necessary. There are more general definitions of interpretability (see \citep{h}) because the homomorphism condition is needed only to handle the modal operators. It would be interesting to explore these more general versions. 

In future work, we intend to use the techniques in this paper to study other topics in finitely-valued modal logic, such as 0-1 laws. Furthermore, it is possible to reduce all  finitely-valued predicate logics to classical first-order logic through a translation and we intend to study the consequences (and limitations) of that  approach for model-theoretic studies of finitely-valued predicate logics.
 
Finally, we wish to remark that the results obtained here might be relevant for the philosophical debate~\citep{sc} around the so called ``Suszko's thesis"~\citep{su}, namely, that many-valued logics can be reduced to two-valued logic.

\section*{Acknowledgements}
\noindent Badia was partially supported by the Australian Research Council grant DE220100544. Badia and Noguera were also  supported by the European Union's Marie Sklodowska--Curie grant no.\ 101007627 (MOSAIC project).

\end{document}